\nonstopmode \numberwithin{equation}{section}
\newtheorem{thm}{Theorem}[section]
\newtheorem{lem}{Lemma}[section]
\newtheorem{cor}[thm]{Corollary}
\newtheorem{prop}[thm]{Proposition}
\theoremstyle{definition}
\newtheorem{mlem}{Main lemma}[section]
\newtheorem{assertion}{Assertion}[section]
\newtheorem{cl}{Claim}[section]
\newtheorem{ca}{Case}[section]
\newtheorem{sca}{Subcase}[section]
\newtheorem{scl}{Subclaim}[section]
\newtheorem{conj}[thm]{Conjecture}
\newtheorem{fact}{Fact}[section]
\newtheorem{defn}{Definition}[section]
\newtheorem{prob}{Problem}[section]
\newtheorem{ques}[thm]{Question}
\newtheorem{rem}{Remark}[section]
\newtheorem{exam}{Example}[section]
\numberwithin{equation}{section}
\DeclareMathOperator*{\esssup}{ess\,sup}
\newcounter {own}
\def\theown {\thesection       .\arabic{own}}
\newlist{steps}{enumerate}{1}
\setlist[steps,1]{
  leftmargin=*,
  label=\textbf{Step \arabic*}.,
  ref=Step~\arabic*,
}
\newenvironment{pf}[1][]{%
 \vskip 3mm
 \noindent
 \ifthenelse{\equal{#1}{}}%
  {{\slshape Proof. }}%
  {{\slshape #1.} }%
 }%
{\qed\bigskip}
\newcounter{alphabet}
\newcounter{tmp}
\newenvironment{Thm}[1][]{\refstepcounter{alphabet}%
\bigskip%
\noindent%
{\bf Theorem \Alph{alphabet}}%
\ifthenelse{\equal{#1}{}}{}{ (#1)}%
{\bf .} \itshape}{\vskip 8pt}
\newcounter{alphabet2}
\newcommand{\Ref}[1]{\@ifundefined{r@#1}{}{\setcounter{tmp}{\ref{#1}}\Alph{tmp}}}
\newcommand{\ID}{{\mathbb D}}
\def\be{\begin{equation}}
\def\ee{\end{equation}}
\newcommand{\ben}{\begin{enumerate}}
\newcommand{\een}{\end{enumerate}}
\newcommand{\blem}{\begin{lem}}
\newcommand{\elem}{\end{lem}}
\newcommand{\bthm}{\begin{thm}}
\newcommand{\ethm}{\end{thm}}
\newcommand{\bcor}{\begin{cor}}
\newcommand{\ecor}{\end{cor}}
\newcommand{\beg}{\begin{exam}}
\newcommand{\eeg}{\end{exam}}
\newcommand{\begs}{\begin{examples}}
\newcommand{\eegs}{\end{examples}}
\newcommand{\bdefe}{\begin{defn}}
\newcommand{\edefe}{\end{defn}}
\newcommand{\bprob}{\begin{prob}}
\newcommand{\eprob}{\end{prob}}
\newcommand{\bques}{\begin{ques}}
\newcommand{\eques}{\end{ques}}
\newcommand{\bei}{\begin{itemize}}
\newcommand{\eei}{\end{itemize}}
\newcommand{\bcon}{\begin{conj}}
\newcommand{\econ}{\end{conj}}
\newcommand{\bop}{\begin{op}}
\newcommand{\eop}{\end{op}}
\newcommand{\bas}{\begin{assertion}}
\newcommand{\eas}{\end{assertion}}
\newcommand{\bfa}{\begin{fact}}
\newcommand{\efa}{\end{fact}}
\newcommand{\bca}{\begin{ca}}
\newcommand{\eca}{\end{ca}}
\newcommand{\bsca}{\begin{sca}}
\newcommand{\esca}{\end{sca}}
\newcommand{\bcl}{\begin{cl}}
\newcommand{\ecl}{\end{cl}}
\newcommand{\bmlem}{\begin{mlem}}
\newcommand{\emlem}{\end{mlem}}
\newcommand{\bscl}{\begin{scl}}
\newcommand{\escl}{\end{scl}}
\newcommand{\bcons}{\begin{conjs}}
\newcommand{\econs}{\end{conjs}}
\newcommand{\bprop}{\begin{prop}}
\newcommand{\eprop}{\end{prop}}
\newcommand{\br}{\begin{rem}}
\newcommand{\er}{\end{rem}}
\newcommand{\brs}{\begin{rems}}
\newcommand{\ers}{\end{rems}}
\newcommand{\bo}{\begin{obser}}
\newcommand{\eo}{\end{obser}}
\newcommand{\bos}{\begin{obsers}}
\newcommand{\eos}{\end{obsers}}
\newcommand{\bpf}{\begin{pf}}
\newcommand{\epf}{\end{pf}}
\newcommand{\ba}{\begin{array}}
\newcommand{\ea}{\end{array}}
\newcommand{\beq}{\begin{eqnarray}}
\newcommand{\beqq}{\begin{eqnarray*}}
\newcommand{\eeq}{\end{eqnarray}}
\newcommand{\eeqq}{\end{eqnarray*}}
\newcounter{minutes}\setcounter{minutes}{\time}
\newcounter{hours}\setcounter{hours}{\time}
\begin{document}
\newcommand\blfootnote[1]{%
\begingroup
\renewcommand\thefootnote{}\footnote{#1}%
\addtocounter{footnote}{-1}%
\endgroup
}

\bibliographystyle{amsplain}

\title{$L^p$-theory for Cauchy-transform on the unit disk}
\author{David Kalaj}
\address{David Kalaj, University of Montenegro, Faculty of Natural Sciences and
Mathematics, Cetinjski put b.b. 81000 Podgorica, Montenegro}
\email{davidk@ucg.ac.me}

\author{Petar Melentijevi\'c}
\address{Petar Melentijevi\'c, Matemati\v cki fakultet, University of Belgrade, Serbia}
\email{petarmel@matf.bg.ac.rs}

\author{Jian-Feng Zhu}
\address{Jian-Feng Zhu, School of Mathematical Sciences,
Huaqiao University,
Quanzhou 362021, People's Republic of China and Department of Mathematics,
Shantou University, Shantou, Guangdong 515063, People's Republic of China.
} \email{flandy@hqu.edu.cn}

\date{}

\subjclass[2000]{Primary 42B20, 42B38}
\keywords{Cauchy transform, Beurling transform, Bergman projection, Hilbert norm, Bessel function.\\
}

\begin{abstract}
Let $\mathbb{D}$ be the unit disk and $\varphi\in L^p(\mathbb{D}, \mathrm{d}A)$, where $1\leq p\leq\infty$. For $z\in\mathbb{D}$, the Cauchy-transform on $\mathbb{D}$, denote by $\mathcal{P}$, is defined as follows:
$$\mathcal{P}[\varphi](z)=-\int_{\mathbb{D}}\left(\frac{\varphi(w)}{w-z}+\frac{z\overline{\varphi(w)}}{1-\bar{w}z}\right)\mathrm{d}A(w).$$
The Beurling transform on $\mathbb{D}$, denote by $\mathcal{H}$, is now defined as the $z$-derivative of $\mathcal{P}$.
In this paper, by using Hardy's type inequalities and Bessel functions, we show that $\|\mathcal{P}\|_{L^2\to L^2}=\alpha\approx1.086$, where
$\alpha$ is a solution to the equation: $2J_0(2/\alpha)-\alpha J_1(2/\alpha)=0$, and $J_0$, $J_1$ are Bessel functions.
Moreover, for $p>2$, by using Taylor expansion, Parseval's formula and hypergeometric functions, we also prove that $\|\mathcal{P}\|_{L^p\to L^{\infty}}=2(\Gamma(2-q)/\Gamma^2(2-\frac{q}{2}))^{1/q}$, where $q=p/(p-1)$ is the conjugate exponent of $p$, and $\Gamma$ is the Gamma function.
Finally, applying the same techniques developed in this paper, we show that the Beurling transform $\mathcal{H}$ acts as an isometry of $L^2(\mathbb{D}, \mathrm{d}A)$.
\end{abstract}
\thanks{}

\maketitle \pagestyle{myheadings} \markboth{David Kalaj, Petar Melentijevi\'c, and Jian-Feng Zhu}{$L^p$-theory for Cauchy-transform on the unit disk}


\section{Introduction}\label{sec-1}
Let $\ID=\{z:|z|<1\}$ be the unit disk of $\mathbb{C}$ and $\mathbb{T}=\{\zeta: |\zeta|=1\}$ the unit circle. Denote by $L^p(\ID, \mathrm{d}A)$ $(1\leq p\leq\infty)$ the space of
complex-valued measurable functions on $\ID$ with finite integral
$$\|\varphi\|_{p}=\left(\int_{\ID}|\varphi(z)|^p\mathrm{d} A(z)\right)^{\frac{1}{p}},\ \ \ 1\leq p<\infty,$$
where
$$\mathrm{d}A(z) =\frac{1}{\pi}\mathrm{d}x\mathrm{d}y=\frac{1}{\pi}r\mathrm{d}r\mathrm{d}\theta,\ \ \ z=x+iy=re^{i\theta},$$
is the normalized area measure on $\ID$ (cf. \cite[Page 1]{Hendenmalm}). For the case $p=\infty$, we let $L^{\infty}(\ID, \mathrm{d}A)$ denote the space of (essentially) bounded
functions on $\ID$. For $\varphi\in L^{\infty}(\ID, \mathrm{d}A)$, we define
$$\|\varphi\|_\infty=\esssup\{|\varphi(z)|: z\in\ID\}.$$
The space $L^{\infty}(\ID, \mathrm{d}A)$ is a Banach space with the above norm  (cf. \cite[Page 2]{Hendenmalm}).

Recall that the norm of an operator $T: X\rightarrow Y$ between two normed spaces $X$ and $Y$ is defined by
$$\|T\|_{X\rightarrow Y}=\sup\{\|Tx\|_Y:\|x\|_X=1\}.$$
For simplicity, in this paper, if $X=L^{p}(\ID, \mathrm{d}A)$ and $Y=L^{q}(\ID, \mathrm{d}A)$, then we write $\|T\|_{L^p\to L^q}$ instead of $\|T\|_{L^p(\ID,\, \mathrm{d}A)\to L^q(\ID,\, \mathrm{d}A)}$ for the norm of the operator $T$. Moreover, for the case of $X=Y=L^{p}(\ID, \mathrm{d} A)$, we then write $\|T\|_p$ instead of $\|T\|_{L^{p}\rightarrow L^{p}}$ for the $L^p$ norm of $T$.

The inhomogeneous Cauchy-Riemann system
\be\label{622-cau-eq}\frac{\partial f}{\partial \bar{z}}=\varphi\ee
can be solved uniquely if one can fix the domain of $z$ and impose some boundary conditions on $f$.

Suppose $z\in\ID$ and $f\in W^{1,\, p}(\ID)$, the Sobolev space of $\ID$, i.e., the partial derivatives of $f$ is of $L^p(\ID, \mathrm{d}A)$ space.
If the real part of $f$  vanishes on $\mathbb{T}$, then the solutions to (\ref{622-cau-eq}) can be given explicitly
by the following formula (cf. \cite[Page 151, (4.134)]{Astala}):
\be\label{ope-P}f(z)=\mathcal{P}[\varphi](z)=-\int_{\ID}\left(\frac{\varphi(w)}{w-z}+\frac{z\overline{\varphi(w)}}{1-\bar{w}z}\right)\mathrm{d}A(w).\ee

Here the first integral is known as the {\it Cauchy integral operator} on $\ID$, denote by $\mathfrak{C}$, which is given by (cf.  \cite{ Baranov, Dostanic-plms})
$$\mathfrak{C}[\varphi](z)=\int_{\ID}\frac{\varphi(w)}{w-z}\mathrm{d} A(w).$$
According to \cite[Theorem 2.1]{Anderson2}, we know that $\mathfrak{C}$ is compact from $L^2(\ID, \mathrm{d} A)$ to $L^2(\ID, \mathrm{d} A)$.
Moreover, it was proved in \cite[Theorem 1]{Anderson} that
$$\|\mathfrak{C}\|_{2}=\frac{2}{j_0},$$
where $j_0\approx2.405$ is the smallest positive zero of the {\it Bessel function} $J_0(x)$, which is given by (\ref{Bessel-function}) below.

Let $\mathfrak{J}_0$ be the operator on analytic functions $h$ in $\ID$ defined by (cf. \cite[Page 9]{Baranov})
$$\mathfrak{J}_0[h](z)=\int_0^zh(w)\mathrm{d}w,\ \ \ \ z\in\ID.$$
Then one can extend it to an integration operator on $L^p(\ID, \mathrm{d}A)$ as follows:
$$\mathfrak{J}_0[\varphi](z)=\int_{\ID}\frac{z\varphi(w)}{1-\bar{w}z}\mathrm{d}A(w),$$
where $\varphi\in L^p(\ID, \mathrm{d}A)$ and $p\geq1$. Its adjoint operator $\mathfrak{J}_0^*$ is given as follows (cf. \cite[Page 12]{Baranov} ):
$$\mathfrak{J}_0^*[\varphi](z)=\int_{\ID}\frac{\bar{w}\varphi(w)}{1-\bar{w}z}\mathrm{d}A(w).$$

Following the proof of \cite[Theorem 2.1]{Anderson2}, one can easily obtain that $\mathfrak{J}_0$ is also compact from $L^2(\ID, \mathrm{d}A)$ to $L^2(\ID, \mathrm{d}A)$.
Very recently, the authors of this paper showed in \cite[Theorem 1.5]{KZ-JFA} that
$$\|\mathfrak{J}_0^*\|_2=\frac{1}{\sqrt{2}}.$$

Now, it is easy to see that
$$\mathcal{P}[\varphi]=-\mathfrak{C}[\varphi]-\mathfrak{J}_0[\bar{\varphi}],$$
is a compact operator. The following problem becomes interesting:
\begin{prob}\label{prob1}
What is the norm $\|\mathcal{P}\|_2$?
\end{prob}

It should be noted that $\mathcal{P}$ is not a complex linear operator.
Let $d$ be an integer. Denote by $\mathcal{F}_d$ the space of polynomials in $w$ and $\bar{w}$ spanned by the functions $w^m\bar{w}^n$, where $d=m-n$, and $m$, $n$ are arbitrary non-negative integers.

 Contrary to the proof of \cite[Theorem 1]{Anderson}, for $\varphi_i\in\mathcal{F}_i$ and $\varphi_j\in\mathcal{F}_j$, where $i\neq j$, the functions $\mathcal{P}[\varphi_i]$
and $\mathcal{P}[\varphi_j]$ are not necessarily orthogonal in the Hilbert space $L^2(\ID, \mathrm{d}A)$ (see Lemma \ref{orth1-lemma} below).
This makes the problem much harder.
In this paper, we prove the following results:

\begin{thm}\label{general-2norm}

Let  $\alpha\approx1.086$ be the solution to the following equation:
$$2J_0\left(\frac{2}{\alpha}\right)-\alpha J_1\left(\frac{2}{\alpha}\right)=0,$$
where $J_0$, $J_1$ are Bessel functions given by $(\ref{Bessel-function})$ below. Then $$\|\mathcal{P}\|_{2}=\alpha.$$
\end{thm}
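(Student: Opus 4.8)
The plan is to exploit the orthogonal decomposition $L^2(\ID,\mathrm{d}A)=\bigoplus_{d\in\Z}\mathcal{F}_d$ together with the splitting $\mathcal{P}=-\mathfrak{C}-\mathfrak{J}_0(\overline{\,\cdot\,})$. First I would compute the action of $\mathcal{P}$ on the monomials $w^m\bar w^n$: a residue computation for $\mathfrak{C}$ and a Taylor expansion of $(1-\bar w z)^{-1}$ for $\mathfrak{J}_0$ give, writing $d=m-n$,
\[
\mathcal{P}[w^m\bar w^n](z)=\frac{1}{n+1}\bigl(z^m\bar z^{\,n+1}-z^{\,m-n-1}\bigr)\quad(d\ge 1),
\]
\[
\mathcal{P}[w^m\bar w^n](z)=\frac{1}{n+1}\bigl(z^m\bar z^{\,n+1}-z^{\,n-m+1}\bigr)\quad(d\le 0).
\]
Thus the $\mathbb{C}$-linear part $-\mathfrak{C}$ sends $\mathcal{F}_d$ into $\mathcal{F}_{d-1}$, while the $\mathbb{C}$-antilinear part $-\mathfrak{J}_0(\overline{\,\cdot\,})$ sends $\mathcal{F}_d$ into $\mathcal{F}_{1-d}$ and vanishes for $d\ge 1$.

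The structural step is to block-diagonalize the real-quadratic form $\varphi\mapsto\|\mathcal{P}\varphi\|_2^2$. Collecting $\mathcal{F}_k$-components, one sees $\mathcal{F}_k$ receives $-\mathfrak{C}\varphi_{k+1}$ always, and $-\mathfrak{J}_0\overline{\varphi_{1-k}}$ only when $k\ge 1$; hence the antilinear term couples $\varphi_j$ with $\varphi_{2-j}$ inside $\mathcal{F}_{j-1}$. The form therefore splits orthogonally into the single space $\mathcal{F}_1$ (on which $\mathcal{P}=-\mathfrak{C}$ is $\mathbb{C}$-linear) and the coupled pairs $\{\mathcal{F}_j,\mathcal{F}_{2-j}\}$ for $j\ge 2$; this is exactly the non-orthogonality flagged in the introduction. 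I would then pass to radial profiles: writing $\varphi_j=w^{j}a(|w|^2)$ and $\varphi_{2-j}=\bar w^{\,j-2}b(|w|^2)$ and using $\int_{\ID}F(|z|^2)\,\mathrm{d}A=\int_0^1F(t)\,dt$, both $\|\mathcal{P}\varphi\|_2^2$ and $\|\varphi\|_2^2$ become weighted one-dimensional integrals of $a$, $b$ and their primitives, i.e. Hardy-type functionals.

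On $\mathcal{F}_1$ the problem is $\sup\int_0^1|G|^2\,dt\big/\int_0^1 t|G'|^2\,dt$ with $G(1)=0$; its Euler–Lagrange equation $\lambda(tG')'+G=0$ is Bessel's equation of order $0$, and the regular solution $G(t)=J_0(2\sqrt{t/\lambda})$ with $G(1)=0$ forces the norm $2/j_0\approx 0.831$, recovering Anderson's constant. The extremal block is $j=2$, i.e. $\{\mathcal{F}_2,\mathcal{F}_0\}$: with $F(t)=\int_t^1 a$ and $H(t)=\int_0^t b$ the reduction yields
\[
\|\mathcal{P}\varphi\|_2^2=\int_0^1\frac{|H|^2}{t}\,dt+\int_0^1 t\,\bigl|F+\overline{H(1)}\bigr|^2\,dt,\qquad \|\varphi\|_2^2=\int_0^1 t^2|F'|^2\,dt+\int_0^1|H'|^2\,dt,
\]
and the Euler–Lagrange system reduces, after the substitution $u=t(F+H(1))$ and directly for $H$, to two copies of $t u''+\lambda^{-1}u=0$, Bessel's equation of order $1$. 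Imposing regularity at $0$, $F(1)=0$, and the free-endpoint condition $\int_0^1 t(F+H(1))\,dt=\lambda H'(1)$, a computation using $\int_0^1\sqrt t\,J_1(2\sqrt{ct})\,dt=J_2(2\sqrt c)/\sqrt c$ and $H'(1)=\sqrt c\,J_0(2\sqrt c)$ (with $c=1/\lambda$) collapses the eigenvalue condition to $J_1(2\sqrt c)=2\sqrt c\,J_0(2\sqrt c)$; since $\lambda=\alpha^2$ and $2\sqrt c=2/\alpha$, this is precisely $2J_0(2/\alpha)-\alpha J_1(2/\alpha)=0$, whose smallest positive root $2/\alpha\approx 1.84$ gives $\alpha\approx 1.086$.

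It remains to show this block is dominant. Since $\alpha>2/j_0$ it beats $\mathcal{F}_1$, and for $j\ge 3$ the pair $\{\mathcal{F}_j,\mathcal{F}_{2-j}\}$ carries the heavier weight $t^{\,j-1}$ in the governing integrals and produces Bessel functions of higher order, so a comparison/monotonicity argument should bound its top eigenvalue below $\alpha^2$. Compactness of $\mathcal{P}$ (established earlier) guarantees the suprema are attained, legitimizing the variational analysis. The main obstacle is exactly the coupling introduced by the antilinear term $\mathfrak{J}_0(\overline{\,\cdot\,})$: performing the block-diagonalization correctly and then solving the coupled two-function boundary value problem so that the free endpoint produces the mixed Bessel relation $2J_0(2/\alpha)-\alpha J_1(2/\alpha)=0$ rather than the Dirichlet condition $J_0=0$; a secondary difficulty is the uniform comparison showing that no block with $j\ge 3$ exceeds the value obtained at $j=2$.
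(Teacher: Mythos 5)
Your proposal follows essentially the same route as the paper on the decisive block: the same decomposition into angular-frequency spaces $\mathcal{F}_d$, the same observation that the antilinear part $-\mathfrak{J}_0[\overline{\,\cdot\,}]$ couples $g_d$ with $g_{2-d}$ (the paper's Lemma \ref{orth1-lemma}), the same reduction of the $\{g_0,g_2\}$ block to a one-dimensional weighted variational problem, and the same Euler--Lagrange/Bessel analysis leading to $J_1(\delta)=\delta J_0(\delta)$ with $\delta=2/\alpha$, i.e.\ $2J_0(2/\alpha)-\alpha J_1(2/\alpha)=0$. Your reduction $\|\mathcal{P}\varphi\|_2^2=\int_0^1|H|^2t^{-1}\,dt+\int_0^1 t|F+\overline{H(1)}|^2\,dt$ is, after the substitution $t=r^2$, exactly the paper's identity \eqref{july-4-eq1}, and your free-endpoint computation with $\int_0^1\sqrt{t}\,J_1(2\sqrt{ct})\,dt=J_2(2\sqrt{c})/\sqrt{c}$ reproduces the paper's equation \eqref{july-7-eq1}. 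This part is correct.

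The genuine gap is the dominance of the $j=2$ block. For $j\ge 3$ you assert that ``a comparison/monotonicity argument should bound its top eigenvalue below $\alpha^2$,'' but no such argument is given, and it is not obvious: each pair $\{\mathcal{F}_j,\mathcal{F}_{2-j}\}$ is itself a coupled two-function eigenvalue problem, and the cross term could in principle inflate its top eigenvalue beyond what the individual Hardy constants suggest. The paper closes this by \emph{decoupling} the pairs with the elementary bound $2|\mathrm{Re}\langle\mathfrak{C}[g_d],\mathcal{P}[g_{2-d}]\rangle|\le\|\mathfrak{C}[g_d]\|_2^2+\|\mathcal{P}[g_{2-d}]\|_2^2$ (so the cross terms are absorbed at the cost of doubling the diagonal terms), and then bounding each resulting term by Boyd's sharp weighted Hardy inequality (Lemma \ref{lem-628-boyd-d}): this yields the explicit constants $8/j_2^2\approx 0.303$, $8/j_1^2\approx 0.545$, $4/j_0^2\approx 0.69$ and $1/3$ for the $\mathfrak{J}_0$ contribution, all strictly below $\lambda_0\approx 1.18$. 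You need either this decoupling step or a worked-out per-block eigenvalue comparison; as written, the claim that no block with $j\ge 3$ exceeds the $j=2$ value is unsupported, and it is precisely the step that turns the upper bound into an equality for the operator norm. A second, smaller omission: you should justify (as the paper does via Lemma \ref{step-4-lem6-7-9} on the largest eigenvalue of $\mathcal{P}^*\mathcal{P}$) why solving the stationarity condition $Z=\lambda$ identifies the supremum rather than merely a critical value; compactness alone gives attainment but not the identification of the correct root.
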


The main idea of proving Theorem \ref{general-2norm} is as follows: For $w\in\ID$, let
$$\varphi(w)=\sum_{m,\, n\geq0}a_{m, n}w^m\bar{w}^n=\sum_{d=-\infty}^{\infty}g_d(w),$$
where $a_{m, n}$ are complex numbers, so that only finitely many of them are nonzero and $d=m-n$. Here $$g_d(w)=\sum_{n\geq\max\{0, -d\}}a_{n+d, n}w^m\bar w^n.$$ Then such functions $\varphi$ are dense in $L^2(\ID, \mathrm{d}A)$ (cf. \cite[Page 180]{Anderson}) and
$$\|\mathcal{P}\|_2^2=\sup_{\|\varphi\|_2=1}\frac{\|\mathcal{P}[\varphi]\|_2^2}{\|\varphi\|_2^2}.$$
To find such a supremum, by using {\it Hardy's type inequalities}, Bessel functions and the {\it H\"older's inequality for integral},
we reduce the problem to finding the maximum value of the following function:
$$\frac{\|\mathcal{P}[g_0+g_2]\|_2^2}{\|g_0\|_2^2+\|g_2\|_2^2}.$$
By using {\it Lagrange multiplier method}, we finally get the exact maximum value.

Moreover, by using {\it Taylor expansion}, {\it Parseval's formula} and {\it hypergeometric functions}, we prove the following theorem.
\begin{thm}\label{infinity-norm}
For $p>2$, we have
$$\|\mathcal{P}\|_{L^p\to L^{\infty}}=2{\bigg(\frac{\Gamma(2-q)}{\Gamma^2(2-\frac{q}{2})}\bigg)}^{\frac{1}{q}},$$
where $q=p/(p-1)$ is the conjugate exponent of $p$, and $\Gamma$ is the Gamma function.

If in particular $p=\infty$, then
$$\|\mathcal{P}\|_{\infty}=\frac{8}{\pi}.$$
\end{thm}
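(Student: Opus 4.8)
The plan is to evaluate $\|\mathcal{P}\|_{L^p\to L^\infty}$ by duality, reducing it to the $L^q$-norm of an explicit kernel, and then to locate the extremal configuration on the boundary circle $\mathbb{T}$. First, fix $z\in\ID$ and write $\mathcal{P}[\varphi](z)=-\int_{\ID}\big(\varphi(w)A(w)+\overline{\varphi(w)}B(w)\big)\,\mathrm{d}A(w)$ with $A(w)=(w-z)^{-1}$ and $B(w)=z(1-\bar w z)^{-1}$. Because $\mathcal{P}$ is only real-linear, I would pass to $\real(\bar c\,\mathcal{P}[\varphi](z))$ for $|c|=1$; a short computation using $\real[\bar c B\overline\varphi]=\real[c\overline B\varphi]$ gives $\real(\bar c\,\mathcal{P}[\varphi](z))=-\int_{\ID}\real[(\bar c A+c\overline B)\varphi]\,\mathrm{d}A$. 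Combining $|\mathcal{P}[\varphi](z)|=\max_{|c|=1}\real(\bar c\,\mathcal{P}[\varphi](z))$ with the H\"older duality between $L^p$ and $L^q$, and factoring out the unimodular phase $\bar c$ (setting $\mu=c^2$), I obtain
\[
\sup_{\|\varphi\|_p=1}|\mathcal{P}[\varphi](z)|=\sup_{|\mu|=1}\Big\|\tfrac{1}{w-z}+\tfrac{\mu\bar z}{1-\bar z w}\Big\|_{q}=:\sup_{|\mu|=1}\|K_{z,\mu}\|_q .
\]
Taking the supremum over $z$ and using rotational symmetry to restrict to $z=r\in[0,1)$ reduces everything to computing $\sup_{r\in[0,1),\,|\mu|=1}\|K_{r,\mu}\|_q$.

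The candidate value comes from the boundary. For $\zeta\in\mathbb{T}$ one has $|w-\zeta|=|1-\bar\zeta w|$, so $|w-\zeta|^{-q}=(1-\bar\zeta w)^{-q/2}(1-\zeta\bar w)^{-q/2}$; expanding both factors by the binomial (Taylor) series and integrating termwise against the orthogonal system $\{w^n\bar w^m\}$ (Parseval, with $\int_{\ID}|w|^{2n}\,\mathrm{d}A=1/(n+1)$) gives
\[
\int_{\ID}\frac{\mathrm{d}A(w)}{|w-\zeta|^{q}}=\sum_{n\ge0}\frac{((q/2)_n)^2}{(n!)^2(n+1)}={}_{2}F_{1}\!\left(\tfrac q2,\tfrac q2;2;1\right)=\frac{\Gamma(2-q)}{\Gamma^2(2-\frac q2)},
\]
the last equality by Gauss's summation theorem (legitimate since $q<2$). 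As $r\to1$ with $\mu=-1$ one has $K_{r\zeta,-1}\to 2(w-\zeta)^{-1}$, so the limiting value of $\|K_{r,\mu}\|_q$ is exactly $2(\Gamma(2-q)/\Gamma^2(2-\tfrac q2))^{1/q}$, the asserted number.

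It remains to prove that this boundary value is the supremum. For the lower bound I would insert into $\mathcal{P}[\varphi](r\zeta)$ the $L^p$-normalised extremiser of $\|2(w-\zeta)^{-1}\|_q$, namely $\varphi$ with $|\varphi|\propto|w-\zeta|^{-(q-1)}$ (which lies in $L^p$ precisely because $(q-1)p=q<2$) and phase aligned with $\overline{(w-\zeta)}$, and then let $r\to1$; dominated convergence yields $\|\mathcal{P}\|_{L^p\to L^\infty}\ge 2(\Gamma(2-q)/\Gamma^2(2-\tfrac q2))^{1/q}$. For the upper bound I would apply the triangle inequality in $L^q$,
\[
\|K_{r,\mu}\|_q\le\Big\|\tfrac{1}{w-r}\Big\|_q+\Big\|\tfrac{r}{1-r w}\Big\|_q=:S(r),
\]
which is independent of $\mu$ and becomes an equality in the limit $r\to1$, $\mu=-1$, since there the two summands coincide. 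The second norm equals $(r^{q}\,{}_{2}F_{1}(\tfrac q2,\tfrac q2;2;r^2))^{1/q}$ by the computation of the previous paragraph, and the first, in spite of the interior pole at $w=r$, is evaluated by splitting $\ID$ into $\{|w|<r\}$ and $\{r<|w|<1\}$ and expanding on each annulus, producing a convergent power series in $r$ whose value tends to $\Gamma(2-q)/\Gamma^2(2-\tfrac q2)$ as $r\to1$.

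The entire theorem thus collapses to the one–variable inequality $S(r)\le S(1^-)$ on $[0,1)$, and this last step is the main obstacle: $S$ is the sum of a decreasing term and an increasing term, so the difficulty lies in excluding an interior maximum. I would handle it by differentiating the two hypergeometric series and comparing them coefficientwise, the equality case at $r\to1$, $\mu=-1$ guaranteeing that the bound is sharp. Finally, the case $p=\infty$ is the specialisation $q=1$, where $\Gamma(1)/\Gamma^2(\tfrac32)=4/\pi$ and the formula returns $\|\mathcal{P}\|_\infty=8/\pi$.
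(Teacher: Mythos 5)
Your overall strategy coincides with the paper's: reduce the problem to the $L^q$-norm of the kernel, evaluate the two integrals by Taylor expansion, Parseval's formula and Gauss's theorem, and realize the extremal configuration in the boundary limit with $|\varphi|\propto|w-\zeta|^{-(q-1)}$ (the paper's extremal $\varphi_0(w)=i(1-w)/|1-w|^{1+q/p}$ is exactly this with $\zeta=1$). Your real-linear duality giving $\sup_{\|\varphi\|_p=1}|\mathcal{P}[\varphi](z)|=\sup_{|\mu|=1}\|K_{z,\mu}\|_q$ is a cleaner framing than the paper's term-by-term H\"older estimate, your Parseval computation of $\int_{\ID}|w-\zeta|^{-q}\,\mathrm{d}A(w)={}_2F_1(\frac q2,\frac q2;2;1)=\Gamma(2-q)/\Gamma^2(2-\frac q2)$ is correct, and so is the identification of the limiting kernel $2(w-\zeta)^{-1}$ at $\mu=-1$, $r\to1$.

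The genuine gap is precisely the step you flag as "the main obstacle": the inequality $S(r)\le S(1^-)$. This is where essentially all of the analytic work of the proof lies, and the plan of "differentiating the two hypergeometric series and comparing them coefficientwise" does not obviously go through as stated, because $S(r)=u(r^2)^{1/q}+v(r^2)^{1/q}$ involves the $1/q$-th powers of the two hypergeometric expressions, with $u$ decreasing and $v$ increasing; a coefficientwise comparison of two power series does not control the sum of their $q$-th roots. The paper's resolution is worth adopting: first use concavity of $x\mapsto x^{1/q}$ (the power-mean inequality) to get $u^{1/q}+v^{1/q}\le 2^{1-1/q}\,(u+v)^{1/q}$, and then prove that $\Phi(t)=u(t)+v(t)=\frac{2}{2-q}\,{}_2F_1\left(\frac q2,\frac q2-1;1;t\right)+t^{q/2}\,{}_2F_1\left(\frac q2,\frac q2;2;t\right)$ is increasing on $[0,1]$; this step \emph{is} amenable to coefficient comparison, via the identity $\left(1+\frac{2k}{q}\right)\left(\frac q2\right)_k=\left(\frac q2+1\right)_k$ together with $t^{q/2-1}\ge 1$ (which uses $q<2$). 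Since $u(1)=v(1)=\Gamma(2-q)/\Gamma^2(2-\frac q2)$, this chain yields exactly your $S(r)\le 2\left(\Gamma(2-q)/\Gamma^2(2-\frac q2)\right)^{1/q}$ and makes the two inequalities simultaneously sharp in the limit $r\to1$, $\mu=-1$. With that step supplied, the rest of your outline (including the lower bound, for which Fatou's lemma suffices in place of dominated convergence) is sound and matches the paper's argument.
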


By using the {\it Riesz-Thorin interpolation theorem} in the real case (cf. \cite[Theorem 20]{RST}), which is valid for our operator $\mathcal{P}$, we can give the $L^p$ norm estimate for $\mathcal{P}$ as follows:
\begin{cor}\label{Reiz-Thorin}
Let $\alpha=\|\mathcal{P}\|_2\approx 1.086$. Then
$$\|\mathcal{P}\|_{p}\leq \alpha^{\frac{2}{p}}\left(\frac{8}{\pi}\right)^{1-\frac{2}{p}},\ \ \ \mbox{for}\ \ \ p\geq2.$$
The equality can be attained when $p=2$ and $p=\infty$.
\end{cor}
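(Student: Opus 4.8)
The plan is to obtain the corollary by interpolating between the two endpoint estimates already established: the $L^2\to L^2$ bound $\|\mathcal{P}\|_2=\alpha$ from Theorem \ref{general-2norm} and the $L^\infty\to L^\infty$ bound $\|\mathcal{P}\|_\infty=8/\pi$ from Theorem \ref{infinity-norm}. In the notation of the Riesz--Thorin theorem I would take the two endpoints to be $(p_0,q_0)=(2,2)$ and $(p_1,q_1)=(\infty,\infty)$, with endpoint norms $M_0=\alpha$ and $M_1=8/\pi$. For a given $p\in[2,\infty]$ the interpolation parameter $\theta\in[0,1]$ is determined by $1/p=(1-\theta)/2+\theta/\infty=(1-\theta)/2$, so that $\theta=1-2/p$ and $1-\theta=2/p$; since $q_0=p_0$ and $q_1=p_1$ the target exponent on the range side is again $p$. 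The conclusion $\|\mathcal{P}\|_p\le M_0^{1-\theta}M_1^{\theta}=\alpha^{2/p}(8/\pi)^{1-2/p}$ then reads off immediately, which is exactly the asserted bound.

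The one genuine subtlety, and the reason the classical Riesz--Thorin theorem cannot be quoted verbatim, is that $\mathcal{P}$ is not $\IC$-linear. Indeed, from the decomposition $\mathcal{P}[\varphi]=-\mathfrak{C}[\varphi]-\mathfrak{J}_0[\bar\varphi]$ recorded in the introduction, $\mathcal{P}$ is the sum of the $\IC$-linear Cauchy operator $-\mathfrak{C}$ and the conjugate-linear operator $\varphi\mapsto-\mathfrak{J}_0[\bar\varphi]$; it is therefore only $\IR$-linear. The standard proof of Riesz--Thorin runs the Hadamard three-lines theorem on an analytic family $z\mapsto\int_{\ID}\mathcal{P}[f_z]\,\overline{g_z}\,\mathrm{d}A$, and the presence of the conjugation $\bar\varphi$ destroys the required analyticity in the parameter $z$ (an analytic family $f_z$ is carried to an anti-analytic family $\overline{f_z}$ by the second term). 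To circumvent this I would invoke the real-variable form of the Riesz--Thorin theorem, \cite[Theorem 20]{RST}, which is formulated precisely for $\IR$-linear operators on complex $L^p$ spaces and hence applies directly to $\mathcal{P}$.

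With that tool in hand the remaining verification is routine: the measure space $(\ID,\mathrm{d}A)$ is finite, $\mathcal{P}$ is $\IR$-linear and bounded on the dense class of polynomials in $w,\bar w$, and the two endpoint bounds supply the hypotheses of \cite[Theorem 20]{RST} at $(p_0,q_0)=(2,2)$ and $(p_1,q_1)=(\infty,\infty)$; applying the theorem and extending by density yields $\|\mathcal{P}\|_p\le\alpha^{2/p}(8/\pi)^{1-2/p}$ for every $p\ge2$. The sharpness claim then requires no extra work, since evaluating the right-hand side at $p=2$ gives $\alpha^{1}(8/\pi)^{0}=\alpha=\|\mathcal{P}\|_2$ and at $p=\infty$ gives $\alpha^{0}(8/\pi)^{1}=8/\pi=\|\mathcal{P}\|_\infty$, so the interpolation bound is attained at both endpoints. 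I expect the only delicate point of the whole argument to be the appeal to the real-linear version of the interpolation theorem; once its applicability to the merely $\IR$-linear operator $\mathcal{P}$ is granted, the rest is a direct substitution.
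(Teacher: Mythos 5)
Your proposal is correct and follows exactly the paper's route: the paper likewise obtains the corollary by applying the real-linear version of the Riesz--Thorin theorem \cite[Theorem 20]{RST} to the endpoint bounds $\|\mathcal{P}\|_2=\alpha$ and $\|\mathcal{P}\|_\infty=8/\pi$, precisely because $\mathcal{P}$ is only $\IR$-linear. Your write-up in fact supplies more detail (the explicit interpolation parameter $\theta=1-2/p$ and the endpoint check for sharpness) than the paper, which states the corollary with only the citation.
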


The operator $\mathcal{P}$ itself has some important applications. For example, suppose $f$ is a continuous function in the Sobolev space $W_{loc}^{1, 2}(\ID)$, $\mu$ is an analytic function of $\ID$ with $|\mu|<1$.
The following equation is called {\it the second Beltrami equation}:
\be\label{beltrami}\overline{f_{\bar{z}}}=\mu f_z.\ee
The solutions to (\ref{beltrami}) are harmonic functions, locally quasiregular, but their distortion as measured by the dilatation quotitent (cf. \cite{HS-86})
$$(|f_z|+|f_{\bar{z}}|)/(|f_z|-|f_{\bar{z}}|)=(1+|\mu|)/(1-|\mu|)$$
maybe unbounded at the boundary.

Hengartner and Schober proved in \cite[Theorem 4.1]{HS-86} (see also \cite[Page 126]{Du-04}) that there is a homeomorphic solution $f$ to (\ref{beltrami}), such that $f$ maps $\ID$ onto $\Omega$, a Jordan domain whose boundary is an analytic curve, and $f$ is normalized by $f(0)=w_0$, a fixed point in $\Omega$, and $f_z(0)>0$.
Note that if $\mu=0$, then $f$ is analytic in $\ID$.
This result can be seen as the generalized {\it Riemann mapping theorem} for solutions to the second Beltrami equation.

To prove the existence of such a mapping theorem, they introduced the following integral operator:
\begin{align*}
  \mathcal{T}[\varphi](z) & =-\int_{\ID}\left(\frac{\varphi(w)}{w-z}+\frac{z\overline{\varphi(w)}}{1-\bar{w}z}-\frac{\varphi(w)}{2w}+\frac{\overline{\varphi(w)}}{2\bar{w}}\right)\mathrm{d}A(w) \\
   & =\mathcal{P}[\varphi]+\int_{\ID}\left(\frac{\varphi(w)}{2w}-\frac{\overline{\varphi(w)}}{2\bar{w}}\right)\mathrm{d}A(w).
\end{align*}
Here $\mathcal{T}$ is a compact operator on $L^{p}(\ID, \mathrm{d}A)$ for $p>2$, and has the properties
$$\mbox{Re}\big(\mathcal{T}[\varphi](e^{i\theta})\big)=0,\ \  \mbox{Im}\big(\mathcal{T}[\varphi](0)\big)=0, \ \ \big(\mathcal{T}[\varphi](z)\big)_{\bar{z}}=\varphi.$$
Following their proof, let $\phi:\ID\to \Omega$ be the univalent analytic function with $\phi(0)=w_0$ and $\phi'(0)>0$. Substitute $f=\phi\circ g$. Then $g$ is to be a univalent mapping of $\ID$ onto itselt satisfying the following nonlinear equation:
$$g_{\bar{z}}=\frac{\overline{\phi'\circ g}}{\phi'\circ g}\,\overline{\mu g_z},$$
and has the normalization: $g(0)=0$ and $g_z(0)>0$. By using Wendland's text \cite[Page 68-73]{wendland} and Schauder's fixed-point theorem, they showed that such $g$ exists. In fact, one can choose $g(z)=ze^{\mathcal{T}[\varphi](z)}$. We refer to \cite[Page 477-478]{HS-86} or \cite[Page 131]{Du-04} for more details.

The {\it Beurling transform}  on $\ID$, denote by $\mathcal{H}$, is defined as the $z$-derivative of $\mathcal{P}$, i.e., (cf. \cite[Page 152, (4.136)]{Astala})
\be\label{operator-h}\mathcal{H}[\varphi](z)=\left(\mathcal{P}[\varphi]\right)_z=-p.v.\int_{\ID}\frac{\varphi(w)}{(w-z)^2}\mathrm{d}A(w)-\int_{\ID}\frac{\overline{\varphi(w)}}{(1-\bar{w}z)^2}\mathrm{d}A(w).\ee

It is worth to point out that in the formula of (\ref{operator-h}), the first integral is introduced as the {\it  Beurling transformation} (or {\it Hilbert transformation}), denoted by $\mathcal{S}$, and is taken as a principal value. The second integral is introduced as the {\it Bergman projection}, denoted by $\mathfrak{B}$. Then
$$
\mathcal{H}[\varphi](z)=\mathcal{S}[\varphi](z)-\mathfrak{B}[\bar{\varphi}](z).
$$

Recall that for $\varphi\in L^p(\mathbb{C}, \mathrm{d}A)$, the {\it  Beurling transformation} $\mathcal{S}$ (in the whole plane $\mathbb{C}$) is
the singular integral defined by
$$\mathcal{S}[\varphi](z)=-p.v.\int_{\mathbb{C}} \frac{\varphi(w)}{(z-w)^2}\mathrm{d}A(w):=-\lim_{\varepsilon\to 0}\int_{\mathbb{C}\setminus \mathbb{D}(z,\epsilon)}\frac{\varphi(w)}{(z-w)^2}\mathrm{d}A(w),$$ where $\mathbb{D}(z,\epsilon)=\{w:|z-w|<\varepsilon\}$.
It is known that if $1<p<\infty$, then $\mathcal{S}$ sends $L^p(\mathbb{C}, \mathrm{d}A)$ to $L^p(\mathbb{C}, \mathrm{d}A)$. Moreover,
$\mathcal{S}$ is an isometry in $L^2(\mathbb{C}, \mathrm{d}A)$ (cf. \cite[Page 89]{Ahl}).
However, $\mathcal{S}$ is not an isometry in $L^2(\ID, \mathrm{d}A)$ (see (\ref{beurling-ope}) below).

For the Beurling transform on $\ID$, i.e., $\mathcal{H}$, we have the following result which at the beginning surprised the authors of this paper.
\begin{thm}\label{thm-hilbert-1}
$\mathcal{H}$ is an isometry in $L^2(\ID, \mathrm{d}A)$. In other words, for every $\varphi\in L^2(\ID,\, \mathrm{d}A)$, we have $\|\mathcal{H}[\varphi]\|_{2}=\|\varphi\|_{2}$.
\end{thm}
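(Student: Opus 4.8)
The plan is to realize $\mathcal{H}$ as the restriction to $\mathbb{D}$ of the \emph{whole–plane} Beurling transform $\mathcal{S}$ applied to a reflected extension of $\varphi$, and then to exploit the isometry of $\mathcal{S}$ on $L^2(\mathbb{C},\mathrm{d}A)$ together with a reflection symmetry. First I would rewrite the Bergman term. Since $1-\bar w z=-\bar w\,(z-1/\bar w)$, we have $\frac{1}{(1-\bar wz)^2}=\frac{1}{\bar w^{2}(z-1/\bar w)^2}$, and the substitution $\zeta=1/\bar w$ (which maps $\mathbb{D}$ onto $\mathbb{C}\setminus\overline{\mathbb{D}}$ with $\mathrm{d}A(w)=|\zeta|^{-4}\mathrm{d}A(\zeta)$ and $\zeta^{2}/|\zeta|^{4}=1/\bar\zeta^{2}$) turns the second integral of $\mathcal{H}[\varphi]$ into $\int_{|\zeta|>1}\frac{\Phi(\zeta)}{(z-\zeta)^2}\mathrm{d}A(\zeta)$, where
\[
\Phi(\zeta)=\begin{cases}\varphi(\zeta), & |\zeta|<1,\\ \overline{\varphi(1/\bar\zeta)}/\bar\zeta^{2}, & |\zeta|>1.\end{cases}
\]
Because the singularity $w=z\in\mathbb{D}$ lives only in the first (disk) integral, combining the two pieces yields the key identity $\mathcal{H}[\varphi](z)=\mathcal{S}[\Phi](z)$ for $z\in\mathbb{D}$, where on the right $\mathcal{S}$ is the whole–plane transform.

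Next I would record two properties of this extension. A direct inversion change of variables gives $\int_{|\zeta|>1}|\Phi|^2\,\mathrm{d}A=\int_{\mathbb{D}}|\varphi|^2\,\mathrm{d}A$, hence $\|\Phi\|_{L^2(\mathbb{C})}^2=2\|\varphi\|_2^2$. Moreover $\Phi$ satisfies the reflection symmetry $\Phi(1/\bar w)=w^{2}\overline{\Phi(w)}$ for all $w$. I would then transport this symmetry through $\mathcal{S}$: writing $G=\mathcal{S}[\Phi]$ and performing the substitution $w=1/\bar u$ in $G(1/\bar z)=-\mathrm{p.v.}\!\int_{\mathbb{C}}\frac{\Phi(w)}{(1/\bar z-w)^2}\mathrm{d}A(w)$, the Jacobian $|u|^{-4}$, the factor $\Phi(1/\bar u)=u^{2}\overline{\Phi(u)}$, and the identity $\frac{1}{(1/\bar z-1/\bar u)^2}=\frac{\bar z^{2}\bar u^{2}}{(\bar u-\bar z)^2}$ conspire so that all weights cancel, leaving $G(1/\bar z)=\bar z^{2}\,\overline{G(z)}$. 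Consequently $|G(1/\bar z)|^2=|z|^{4}|G(z)|^2$, and one further inversion change of variables gives $\int_{|\zeta|>1}|G|^2\,\mathrm{d}A=\int_{\mathbb{D}}|G|^2\,\mathrm{d}A$; that is, $G$ carries equal mass inside and outside the disk.

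Finally I would assemble these facts. Since $\mathcal{S}$ is an isometry of $L^2(\mathbb{C},\mathrm{d}A)$, we get $\|G\|_{L^2(\mathbb{C})}^2=\|\Phi\|_{L^2(\mathbb{C})}^2=2\|\varphi\|_2^2$; splitting the left side as $\|G\|_{L^2(\mathbb{D})}^2+\|G\|_{L^2(\mathbb{C}\setminus\mathbb{D})}^2$ and invoking the equal–mass property forces $\|G\|_{L^2(\mathbb{D})}^2=\|\varphi\|_2^2$. As $\mathcal{H}[\varphi]=G$ on $\mathbb{D}$, this is exactly $\|\mathcal{H}[\varphi]\|_2=\|\varphi\|_2$. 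I expect the only delicate point to be the rigorous handling of the principal values under the inversion: one must check that the conformal change of variables preserves the symmetric deletion defining $\mathrm{p.v.}$, and that the identities $\mathcal{H}[\varphi]=\mathcal{S}[\Phi]$ and $G(1/\bar z)=\bar z^{2}\overline{G(z)}$ hold as genuine $L^2$ statements. The clean route is to prove everything first for $\varphi$ a polynomial in $w$ and $\bar w$, for which $\Phi$ is smooth away from $\mathbb{T}$ and every manipulation is classical, and then pass to general $\varphi\in L^2(\mathbb{D},\mathrm{d}A)$ by density, using the $L^2$-boundedness of $\mathcal{H}$ and the continuity of $\Phi\mapsto G$. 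As a sanity check, $\varphi\equiv 1$ gives $\Phi=1$ on $\mathbb{D}$ and $\Phi=1/\bar\zeta^{2}$ outside, whence $\mathcal{H}[1]=-1$ and $\|\mathcal{H}[1]\|_2=1$, consistent with the claim. (One could instead compute $\mathcal{H}[w^m\bar w^n]$ explicitly via Green's theorem and verify orthogonality diagonal by diagonal, but the reflection argument is shorter and more transparent.)
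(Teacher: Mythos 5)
Your argument is correct, but it is a genuinely different route from the one taken in the paper. The paper proceeds computationally: it evaluates $\mathcal{H}[w^m\bar w^n]$ explicitly via Green's formula and the residue theorem (Lemma 5.1), shows that $\mathcal{H}[g_{d_1}]$ and $\mathcal{H}[g_{d_2}]$ are orthogonal for $d_1\neq d_2$ (Lemma 5.2) — which, notably, is \emph{better} behaviour than $\mathcal{P}$ itself exhibits, since $\mathcal{P}[g_d]$ and $\mathcal{P}[g_{2-d}]$ fail to be orthogonal — and then verifies $\|\mathcal{H}[g_d]\|_2^2=\|g_d\|_2^2$ for each $d$ by summing double series, treating the cases $d>1$, $d=1$, $d<1$ separately. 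Your reflection argument instead packages the Bergman term as the restriction of the planar Beurling transform to the inverted exterior data $\Phi(\zeta)=\overline{\varphi(1/\bar\zeta)}/\bar\zeta^{2}$, and all your identities check out: $\mathcal{H}[\varphi]=\mathcal{S}[\Phi]$ on $\mathbb{D}$, $\|\Phi\|_{L^2(\mathbb{C})}^2=2\|\varphi\|_2^2$, the transported symmetry $G(1/\bar z)=\bar z^{2}\overline{G(z)}$, and the resulting equal splitting of $\|G\|_{L^2(\mathbb{C})}^2$ between $\mathbb{D}$ and its complement (your sanity check $\mathcal{H}[1]=-1$ agrees with the paper's Lemma 5.1 at $m=n=0$). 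What your approach buys is conceptual transparency and brevity — the isometry is inherited directly from the Fourier-multiplier isometry of $\mathcal{S}$ on $L^2(\mathbb{C})$, and one sees \emph{why} the Bergman correction restores the mass lost by truncating to the disk; this is essentially the mechanism behind the reference to Astala--Iwaniec--Martin that the authors acknowledge. What the paper's computation buys is a uniform toolkit: the same monomial formulas and diagonal decomposition drive the (much harder) non-isometric analysis of $\mathcal{P}$ in Theorem 1.2, where no reflection trick is available. The one point you rightly flag — commuting the inversion with the principal value — does need the observation that the Beurling kernel has vanishing mean over circles centered at the singularity, so that replacing the exact symmetric deletion by its slightly distorted image under $w\mapsto 1/\bar u$ costs only $O(\varepsilon)$; with that noted, and the polynomial-density limiting argument you describe, the proof is complete.
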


After we finished this paper, we found that Theorem \ref{thm-hilbert-1} was already given in \cite[Page 151]{Astala}.
Anyway, in Section \ref{sec-4}, we provide a different proof of Theorem \ref{thm-hilbert-1}.

Now if $\varphi\in L^2(\ID, \mathrm{d}A)$, then
$\tilde \varphi(z)=\left\{
\begin{array}{ll}
\varphi(z), & \hbox{if\ \  $z\in \ID$} \\
\ \ \ \ 0, & \hbox{if\ \  $z\in \mathbb{C}\setminus\ID$}
\end{array}
\right.\in L^2(\mathbb{C}, \mathrm{d}A)$.
Therefore, if $z\in \ID$, then $\mathcal{S}[\varphi](z) = \mathcal{S}[\tilde \varphi](z)$.
Thus $$\|\mathcal{S}[\varphi]\|_{2}\le \|\mathcal{S}[\tilde \varphi]\|_{L^2(\mathbb{C},\, \mathrm{d}A)}=\|\tilde \varphi\|_{L^2(\mathbb{C},\, \mathrm{d}A)}=\| \varphi\|_{2}.$$
On the other hand, if $\varphi$ is an analytic function on $\ID$, then $\mathfrak{B}[\bar{\varphi}](z)=\overline{\varphi(0)}$. Therefore, we have
the following corollary.
\begin{cor}
The norm of  the Beurling transformation $\mathcal{S}:L^2(\ID,\, \mathrm{d}A)\to L^2(\ID,\, \mathrm{d}A)$ is $1$ and, if $\varphi$ is an analytic function on $\ID$ such that $\varphi(0)=0$, then we have the following relation $$\|\mathcal{S}[\varphi]\|_{2}=\|\varphi\|_{2}.$$
\end{cor}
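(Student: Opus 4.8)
The plan is to derive both parts of the corollary from Theorem \ref{thm-hilbert-1} together with the decomposition $\mathcal{H}[\varphi]=\mathcal{S}[\varphi]-\mathfrak{B}[\bar\varphi]$, using the upper bound $\|\mathcal{S}[\varphi]\|_2\le\|\varphi\|_2$ already established by extending $\varphi$ by zero to $\mathbb{C}$ and invoking the fact that $\mathcal{S}$ is an isometry of $L^2(\mathbb{C},\mathrm{d}A)$. No new integral computation should be needed; everything follows by combining facts recorded above the statement.

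I would treat the isometry-on-a-subspace claim first, since it is the substantive one. Fix an analytic $\varphi$ on $\ID$ with $\varphi(0)=0$. As noted just before the corollary, for analytic $\varphi$ the Bergman projection satisfies $\mathfrak{B}[\bar\varphi](z)=\overline{\varphi(0)}$, and under the hypothesis $\varphi(0)=0$ this constant vanishes; hence $\mathfrak{B}[\bar\varphi]\equiv 0$ and the decomposition collapses to $\mathcal{H}[\varphi]=\mathcal{S}[\varphi]$. I would then simply invoke Theorem \ref{thm-hilbert-1} to conclude $\|\mathcal{S}[\varphi]\|_2=\|\mathcal{H}[\varphi]\|_2=\|\varphi\|_2$, which is exactly the asserted equality.

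For the norm statement, the upper bound $\|\mathcal{S}\|_{L^2(\ID)\to L^2(\ID)}\le 1$ is precisely the zero-extension estimate recalled above. To obtain the matching lower bound I would exhibit a single extremizer: any nonzero analytic $\varphi$ with $\varphi(0)=0$, say $\varphi(z)=z$, makes the ratio $\|\mathcal{S}[\varphi]\|_2/\|\varphi\|_2$ equal to $1$ by the equality just proved, forcing $\|\mathcal{S}\|\ge 1$. Combining the two bounds gives $\|\mathcal{S}\|=1$.

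\textbf{The main obstacle.} In truth there is little resistance here: all the analytic effort is packaged inside Theorem \ref{thm-hilbert-1} and the identity $\mathfrak{B}[\bar\varphi]=\overline{\varphi(0)}$. The one point that deserves attention is the structural observation that the subspace of analytic functions vanishing at the origin is exactly where the Bergman term $\mathfrak{B}[\bar\varphi]$ drops out, so that the isometry of $\mathcal{H}$ transfers verbatim to $\mathcal{S}$ and simultaneously supplies the extremizer needed for the norm. Once this is recognized the corollary follows in two lines.
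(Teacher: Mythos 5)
Your proposal is correct and follows exactly the route the paper intends: the corollary is stated immediately after the zero-extension bound $\|\mathcal{S}[\varphi]\|_2\le\|\varphi\|_2$ and the observation $\mathfrak{B}[\bar\varphi]=\overline{\varphi(0)}$ for analytic $\varphi$, so that $\mathcal{H}[\varphi]=\mathcal{S}[\varphi]$ on the subspace of analytic functions vanishing at $0$ and Theorem \ref{thm-hilbert-1} supplies both the isometry identity and the extremizer for the lower bound. No discrepancy with the paper's (implicit) argument.
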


The rest of this paper is organized as follows: In Section \ref{sec-2}, we prove six lemmas which are related to the operator $\mathcal{P}$. In Section \ref{sec-3}, we give the proofs of Theorem \ref{general-2norm} and Theorem \ref{infinity-norm}. In Section \ref{L1norm}, we give some calculations related to $\|\mathcal{P}\|_1$.
Section \ref{sec-4} is devoted to the proof of Theorem \ref{thm-hilbert-1}.
\section{Preliminaries}\label{sec-2}
In this section, we should introduce some necessary terminology, and prove six lemmas which will be used in proving our main results.

We start with the definitions of Bessel functions and hypergeometric functions.

\begin{defn}(\cite[Page 15]{watson}) Assume that $\alpha$ is a real number. The {\it Bessel function} $J_\alpha(x)$ is defined as follows:
\be\label{Bessel-function}J_\alpha(x)=\sum\limits_{k=0}^{\infty}\frac{(-1)^k}{\Gamma(k+\alpha+1)!k!}\left(\frac{x}{2}\right)^{2k+\alpha}.\ee
\end{defn}

\begin{defn}(\cite[Page 58]{watson}) The {\it Bessel function of second kind (Hankel's function)}, denote by $\mathcal{N}_\alpha(x)$, is defined as follows:
\be\label{Bessel-function2}\mathcal{N}_\alpha(x)=\frac{J_\alpha(x)\cos \alpha \pi - J_{-\alpha}(x)}{\sin \alpha \pi }.\ee
If $n$ is an integer, then we define $\mathcal{N}_n(x) = \lim_{\alpha\to n}\mathcal{N}_\alpha(x)$.
\end{defn}
\begin{defn}\label{defn-2.1}(\cite[(2.1.2)]{Landrews})
The {\it hypergeometric function}, denote by
$$_pF_q(a_1,a_2,\dots,a_p;b_1,b_2,\ldots,b_q;x),$$
is defined by the following series
\be\label{bz-defn-2.1}_pF_q(a_1,a_2,\dots,a_p;b_1,b_2,\ldots,b_q;x)=\sum_{n=0}^\infty\frac{(a_1)_n\cdots(a_p)_n}{(b_1)_n\cdots(b_q)_n}\frac{x^n}{n!}\ee
for all $|x|\leq1$ and by continuation elsewhere.

Here $(q)_n$ is the {\it Pochhammer function} and given as follows
\begin{equation*}
(q)_n=\left\{
\begin{aligned}
1,\ \ \ \ \ \ \  &\mbox{if}\ \ \   n=0;\\
q(q+1)\cdots(q+n-1),\ \ \ \ \ \ \    &\mbox{if}\ \ \  n>0.
\end{aligned}
\right.
\end{equation*}
\end{defn}

We know from \cite{Anderson} that the following polynomials
$$\varphi(w)=\sum\limits_{n=0}^{\infty}\sum\limits_{m=0}^{\infty}a_{m, n}w^m\bar{w}^n$$
are dense in $L^2(\ID,\, \mathrm{d}A)$, where $a_{m, n}$ are complex numbers and only finite many of the complex numbers $a_{m, n}$ are nonzero (cf. \cite[Page 180]{Anderson}). Here $w=\rho e^{i\theta}\in\ID$.

Assume that $d\in\mathbb{Z}$ is an integer. By letting
\be\label{6-28-gd}g_d(\rho e^{i\theta})=f_d(\rho)e^{id\theta}=\sum_{n\geq\max\{0, -d\}}a_{n+d, n}\rho^{2n+d}e^{id\theta},\ee
the function $\varphi$ can be given as follows:
$$\varphi(w)=\sum_{d=-\infty}^{\infty}g_d(\rho e^{i\theta}).$$
We then denote by $\mathcal{F}_d$ the linear space containing all such polynomials $\varphi$.
It is easy to see that if $\varphi_i\in\mathcal{F}_i$ and $\varphi_j\in\mathcal{F}_j$, with $i\neq j$, then $\varphi_i$ and
$\varphi_j$ are orthogonal in the Hilbert space $L^2(\ID,\, \mathrm{d}A)$, and thus,
$$\|\varphi\|_2^2=\sum_{d=-\infty}^{\infty}\|g_d\|_2^2=\sum_{d=-\infty}^{\infty}2\int_0^1|f_d(r)|^2r\mathrm{d}r.$$
Throughout this paper, the functions $g_d$ and $f_d$ are always given by (\ref{6-28-gd}).

We first prove the following lemma which is related to $\mathcal{P}[g_d]$.

\begin{lem}\label{531-lem1}
Let $p(w)=a_{m, n}w^{m}\bar{w}^n$, where $w=\rho e^{i\theta}\in\ID$, $m$, $n$ are non-negative integers and $a_{m, n}$ are complex numbers. Then
for $z\in\ID$, we have
\be\label{531-lem1-1}\mathcal{P}[p](z)=\left\{
\begin{array}
{r@{\ }l}
-\frac{a_{m, n}z^{m-n-1}(1-|z|^{2n+2})}{n+1}, \ \ \ \ \ \ \    & \emph{if}\ \ \ m-n\geq1\\
\\
\frac{a_{m, n}z^{m-n-1}|z|^{2n+2}}{n+1}-\frac{\bar{a}_{m, n}z^{1-m+n}}{n+1}, \ \ \ \ \ \ &  \emph{if}\ \ \  m-n<1.
\end{array}\right.\ee
\end{lem}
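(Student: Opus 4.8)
The plan is to compute the two integrals defining $\mathcal{P}[p]$ separately, using the decomposition $\mathcal{P}[p]=-\mathfrak{C}[p]-\mathfrak{J}_0[\bar p]$ recorded in the introduction. Writing $w=\rho e^{i\theta}$ and $\mathrm{d}A(w)=\frac1\pi\rho\,\mathrm{d}\rho\,\mathrm{d}\theta$, each integral splits into an angular integral against some $e^{ik\theta}$ times a radial integral in $\rho$. The key mechanism is that orthogonality of the exponentials on $[0,2\pi]$ annihilates all but one term of the relevant series expansion, leaving an elementary power of $\rho$ to integrate.

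For the integration-operator piece $\mathfrak{J}_0[\bar p](z)=\bar a_{m,n}\int_{\ID}\frac{z\,\bar w^m w^n}{1-\bar w z}\,\mathrm{d}A(w)$ this is immediate. Since $|\bar w z|\le|z|<1$, I expand $\frac{1}{1-\bar w z}=\sum_{k\ge 0}\bar w^k z^k$, which converges uniformly for $w\in\overline{\ID}$, so I may integrate term by term. The factor $\bar w^{m+k}w^n=\rho^{m+n+k}e^{i(n-m-k)\theta}$ integrates to zero in $\theta$ unless $k=n-m$, which requires $m-n<1$; in that case the surviving radial integral $\int_0^1\rho^{2n+1}\,\mathrm{d}\rho$ produces $\frac{z^{\,1-m+n}}{n+1}$, while for $m-n\ge 1$ this piece vanishes identically.

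The Cauchy piece $\mathfrak{C}[p](z)=a_{m,n}\int_{\ID}\frac{w^m\bar w^n}{w-z}\,\mathrm{d}A(w)$ is the delicate one, since the kernel is singular at the interior point $w=z$. Here I split the disk into the regions $\{|w|<|z|\}$ and $\{|z|<|w|<1\}$ and use $\frac{1}{w-z}=-\sum_{k\ge 0}w^k z^{-k-1}$ on the inner region and $\frac{1}{w-z}=\sum_{k\ge 0}z^k w^{-k-1}$ on the outer one, each converging uniformly away from $|w|=|z|$. Because the singularity is only of order $|w-z|^{-1}$, it is area-integrable, so the splitting and term-by-term integration are justified. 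Angular orthogonality selects $k=n-m$ from the inner sum (contributing only when $m-n<1$) and $k=m-n-1$ from the outer sum (contributing only when $m-n\ge 1$), so exactly one regime survives in each case. Evaluating $\int_0^{|z|}\rho^{2n+1}\,\mathrm{d}\rho$ and $\int_{|z|}^{1}\rho^{2n+1}\,\mathrm{d}\rho$ gives $-\frac{z^{m-n-1}|z|^{2n+2}}{n+1}$ and $\frac{z^{m-n-1}(1-|z|^{2n+2})}{n+1}$ respectively.

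Combining the pieces via $\mathcal{P}[p]=-\mathfrak{C}[p]-\mathfrak{J}_0[\bar p]$ yields the two cases of (\ref{531-lem1-1}): for $m-n\ge 1$ only the outer Cauchy term survives, giving the first line; for $m-n<1$ the inner Cauchy term and the $\mathfrak{J}_0$ term combine to give the second line. I expect the main obstacle to be the careful justification of term-by-term integration of the singular Cauchy kernel across $|w|=|z|$, together with the bookkeeping ensuring that precisely one Fourier mode contributes in each region; once the angular reductions are in place, the radial integrals are routine. As a consistency check one may verify directly that the resulting $\mathcal{P}[p]$ satisfies $\partial_{\bar z}\mathcal{P}[p]=p$ and has vanishing real part on $\mathbb{T}$, exactly as demanded by (\ref{622-cau-eq}) and (\ref{ope-P}).
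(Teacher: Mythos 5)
Your proposal is correct and follows essentially the same route as the paper: the same decomposition $\mathcal{P}[p]=-\mathfrak{C}[p]-\mathfrak{J}_0[\bar p]$, the same splitting of the disk at $|w|=|z|$, and the same reduction to a single surviving angular Fourier mode followed by an elementary radial integral. The only difference is that you evaluate the angular integral by geometric-series expansion plus orthogonality where the paper applies the residue theorem to $\int_{\mathbb{T}}\zeta^{m-n-1}(\zeta-z/\rho)^{-1}\,\mathrm{d}\zeta$; these are interchangeable, and the term-by-term integration you flag as the main obstacle is most cleanly justified by performing the $\theta$-integral first at each fixed $\rho\neq|z|$ (where the series converges uniformly), which is exactly the order of integration the paper uses.
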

\bpf
Recall that
$$\mathcal{P}[p](z)=-\int_{\ID}\left(\frac{p(w)}{w-z}+\frac{z\overline{p(w)}}{1-\bar{w}z}\right)\mathrm{d}A(w),$$
where $p(w)=a_{m, n}w^{m}\bar{w}^n\chi_{\ID}(w)$ and $z\in\ID$. Let $w=\rho e^{i\theta}\in\ID$.
We now calculate the above integrals one by one.

First, we calculate the integral $\int_{\ID}p(w)/(w-z)\mathrm{d}A(w)$ as follows:
By letting $\zeta=e^{i\theta}\in\mathbb{T}$, we have
\begin{align*}
  \int_{\ID}\frac{a_{m, n}w^{m}\bar{w}^n}{w-z}\mathrm{d}A(w) & =\frac{a_{m, n}}{\pi}\int_0^1\rho \mathrm{d}\rho\int_0^{2\pi}\frac{\rho^{m+n}e^{i(m-n) \theta}}{\rho e^{i\theta}-z}\mathrm{d}\theta \\\nonumber
   & =\frac{a_{m, n}}{\pi i}\int_0^1\rho^{m+n}\mathrm{d}\rho\int_{\mathbb{T}}\frac{\zeta^{m-n-1}}{\zeta-z/\rho}\mathrm{d}\zeta.
\end{align*}
For simplicity, let
$$I_{m-n,\, \rho}(z)=\int_{\mathbb{T}}\frac{\zeta^{m-n-1}}{\zeta-z/\rho}\mathrm{d}\zeta.$$
If $|z|>\rho$, then by using Cauchy residue theorem, we have
$$I_{m-n,\, \rho}(z)=\left\{
\begin{array}
{r@{\ }l}
0, \ \ \ \ \ \ \    & \emph{if}\ \ \ m-n\geq1\\
\\
-2\pi i\left(\frac{z}{\rho}\right)^{m-n-1}, \ \ \ \ \ \ &  \emph{if}\  \ \ m-n< 1.
\end{array}\right.$$
If $|z|<\rho$, again, by Cauchy residue theorem, we have
$$I_{m-n,\, \rho}(z)=\left\{
\begin{array}
{r@{\ }l}
2\pi i\left(\frac{z}{\rho}\right)^{m-n-1}, \ \ \ \ \ \ \    & \emph{if}\ \ \  m-n\geq1\\
\\
0, \ \ \ \ \ \ &  \emph{if}\ \ \  m-n< 1.
\end{array}\right.$$
Therefore, if $m-n\geq1$, then
\begin{align*}
  \int_{\ID}\frac{a_{m, n}w^{m}\bar{w}^n}{w-z}\mathrm{d}A(w) & =\frac{a_{m, n}}{\pi i}\int_0^{|z|}\rho^{m+n}I_{m-n,\, \rho}(z)\mathrm{d}\rho+\frac{a_{m, n}}{\pi i}\int_{|z|}^{1}\rho^{m+n}I_{m-n,\, \rho}(z)\mathrm{d}\rho \\
  & =0+2a_{m, n}\int_{|z|}^1\rho^{m+n}\left(\frac{z}{\rho}\right)^{m-n-1}\mathrm{d}\rho\\
  &=\frac{a_{m, n}z^{m-n-1}(1-|z|^{2n+2})}{n+1}.
\end{align*}
Similarly, if $m-n<1$, then
\begin{align*}
  \int_{\ID}\frac{a_{m, n}w^{m}\bar{w}^n}{w-z}\mathrm{d}A(w) &  =-2a_{m, n}\int_0^{|z|}\rho^{m+n}\left(\frac{z}{\rho}\right)^{m-n-1}\mathrm{d}\rho+0\\
  &=-\frac{a_{m, n}z^{m-n-1}|z|^{2n+2}}{n+1}.
\end{align*}
Based on the above formulas, we conclude that
\be\label{531-lem1-eq1}\int_{\ID}\frac{a_{m, n}w^{m}\bar{w}^n}{w-z}\mathrm{d}A(w)=\left\{
\begin{array}
{r@{\ }l}
a_{m, n}\frac{z^{m-n-1}(1-|z|^{2n+2})}{n+1}, \ \ \ \ \ \ \    & \emph{if}\ \ \ m-n\geq1\\
\\
-a_{m, n}\frac{z^{m-n-1}|z|^{2n+2}}{n+1}, \ \ \ \ \ \ &  \emph{if}\  \ \ m-n< 1.
\end{array}\right.\ee

Second, we calculate the integral $\int_{\ID}z\bar{p}(w)/(1-\bar{w}z)\mathrm{d}A(w)$ as follows:
Elementary calculations show that
\begin{align*}
  z\int_{\ID}\frac{\overline{a_{m, n}w^{m}\bar{w}^n}}{1-\bar{w}z}\mathrm{d}A(w) & =\frac{\bar{a}_{m, n}z}{\pi}\int_0^1\rho \mathrm{d}\rho\int_0^{2\pi}\frac{\rho^{m+n}e^{-i(m-n) \theta}}{1-\rho e^{-i\theta}z}\mathrm{d}\theta \\\nonumber
   & =\frac{\bar{a}_{m, n}z}{\pi i}\int_0^1\rho^{m+n+1}\mathrm{d}\rho\int_{\mathbb{T}}\frac{1}{\zeta^{m-n}(\zeta-\rho z)}\mathrm{d}\zeta.
\mathcal{}\end{align*}

If $m-n>0$, then
$$\frac{1}{2\pi i}\int_{\mathbb{T}}\frac{1}{\zeta^{m-n}(\zeta-\rho z)}\mathrm{d}\zeta=\mbox{Res}\left(\frac{1}{\zeta^{m-n}(\zeta-\rho z)}, 0\right)+\mbox{Res}\left(\frac{1}{\zeta^{m-n}(\zeta-\rho z)}, \rho z\right)=0,$$
and thus
$$z\int_{\ID}\frac{\overline{a_{m, n}(\bar{w}^mw^{n})}}{1-\bar{w}z}\mathrm{d}A(w) =0.$$
If $m-n\leq0$,
then
$$\frac{1}{2\pi i}\int_{\mathbb{T}}\frac{\zeta^{n-m}}{\zeta-\rho z}\mathrm{d}\zeta=\mbox{Res}\left(\frac{\zeta^{n-m}}{\zeta-\rho z}, \rho z\right)=(\rho z)^{n-m},$$
and thus
$$z\int_{\ID}\frac{\overline{a_{m, n}w^{m}\bar{w}^n}}{1-\bar{w}z}\mathrm{d}A(w)=2\bar{a}_{m, n}z\int_0^1\rho^{m+n+1}(\rho z)^{n-m}\mathrm{d}\rho =\frac{\bar{a}_{m, n}z^{1-m+n}}{n+1}.$$
Based on the above formulas, we have
\be\label{531-lem1-eq2}z\int_{\ID}\frac{\overline{a_{m, n}w^{m}\bar{w}^n}}{1-\bar{w}z}\mathrm{d}A(w)=\left\{
\begin{array}
{r@{\ }l}
0, \ \ \ \ \ \ \    & \emph{if}\ \ \ m-n\geq1\\
\\
\frac{\bar{a}_{m, n}z^{1-m+n}}{n+1}, \ \ \ \ \ \ &  \emph{if}\ \ \  m-n<1.
\end{array}\right.\ee
The desired equality (\ref{531-lem1-1}) now easily follows from (\ref{531-lem1-eq1}) and (\ref{531-lem1-eq2}).
\epf
\subsection{The functions $\mathcal{P}[g_{d_1}]$ and $\mathcal{P}[g_{d_2}]$ are orthogonal in $L^2(\ID,\, \mathrm{d}A)$ iff $d_1+d_2\neq 2$}
\begin{lem}\label{orth1-lemma}
Let $g_d$ be given by $(\ref{6-28-gd})$.
Then for any different integers $d_1$, $d_2$, so that  $d_1+d_2\neq 2$,
$\mathcal{P}[g_{d_1}](z)$ and $\mathcal{P}[g_{d_2}](z)$ are orthogonal functions, i.e.,
\be\label{orthogonal-1}\int_{\ID}\mathcal{P}[g_{d_1}](z)\overline{\mathcal{P}[g_{d_2}](z)}\mathrm{d}A(z)=0\ \ \ \mbox{for any} \ \ \ d_1\neq d_2.\ee
However, for $d\geq2$, $\mathcal{P}[g_{d}](z)$ and $\mathcal{P}[g_{2-d}](z)$ are not orthogonal functions.
\end{lem}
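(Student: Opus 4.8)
The plan is to reduce the whole statement to the elementary relation $\int_0^{2\pi}e^{ik\phi}\,\mathrm{d}\phi=0$ for $k\in\Z\setminus\{0\}$, by reading off from Lemma~\ref{531-lem1} the angular frequencies carried by $\mathcal{P}[g_d]$. Writing $z=re^{i\phi}$ and applying Lemma~\ref{531-lem1} term by term to the expansion~(\ref{6-28-gd}), linearity of $\mathcal{P}$ gives, for $d\geq1$,
$$\mathcal{P}[g_d](z)=-z^{d-1}\sum_{n\geq0}\frac{a_{n+d,n}\bigl(1-|z|^{2n+2}\bigr)}{n+1},$$
a radial function times $e^{i(d-1)\phi}$, whereas for $d\leq0$,
$$\mathcal{P}[g_d](z)=z^{d-1}\sum_{n\geq-d}\frac{a_{n+d,n}|z|^{2n+2}}{n+1}-z^{1-d}\sum_{n\geq-d}\frac{\bar a_{n+d,n}}{n+1},$$
a superposition of the two \emph{distinct} angular frequencies $d-1$ and $1-d$. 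Hence the set of angular frequencies occurring in $\mathcal{P}[g_d]$ is $\{d-1\}$ if $d\geq1$ and $\{d-1,\,1-d\}$ if $d\leq0$.

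Next I would prove the orthogonality~(\ref{orthogonal-1}). Since $U(r)e^{ik\phi}$ and $V(r)e^{il\phi}$ are orthogonal in $L^2(\ID,\mathrm{d}A)$ whenever $k\neq l$, it suffices to show that the two frequency sets are disjoint when $d_1\neq d_2$ and $d_1+d_2\neq2$, which is a brief case check. For $d_1,d_2\geq1$ the sets $\{d_1-1\}$, $\{d_2-1\}$ are disjoint; for $d_1,d_2\leq0$ the positive entries $1-d_1,1-d_2$ differ and the nonpositive entries $d_1-1,d_2-1$ differ, while a positive entry equals a nonpositive one only if $d_1+d_2=2$, impossible in this range; and for $d_1\geq1$ and $d_2\leq0$ one has $d_1-1\geq0>d_2-1$, so the sole possible coincidence is $d_1-1=1-d_2$, i.e.\ $d_1+d_2=2$. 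Excluding that case yields disjointness, hence~(\ref{orthogonal-1}).

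The same bookkeeping identifies the exceptional case: a shared frequency forces $d_1+d_2=2$ with $d_1\neq d_2$, so among integers $\{d_1,d_2\}=\{d,2-d\}$ with $d\geq2$, the common frequency being $d-1$. To establish the ``however'' clause I would verify that the cross integral is genuinely nonzero, not merely that the frequencies can match. Only the frequency-$(d-1)$ parts survive: from $\mathcal{P}[g_d]$ this is $-z^{d-1}\sum_{n\geq0}\frac{a_{n+d,n}(1-|z|^{2n+2})}{n+1}$, and from $\mathcal{P}[g_{2-d}]$ it is the reflection term $-z^{d-1}\sum_{n\geq d-2}\frac{\bar a_{n+2-d,n}}{n+1}$, a constant multiple of $z^{d-1}$; multiplying and integrating leaves the angular factor $2\pi$ and a nonvanishing radial integral. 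Concretely, the minimal instance $d=2$, $g_2=a_{2,0}w^2$, $g_0=a_{0,0}$ gives $\int_{\ID}\mathcal{P}[g_2]\overline{\mathcal{P}[g_0]}\,\mathrm{d}A=\tfrac16\,a_{2,0}a_{0,0}\neq0$, exhibiting the failure of orthogonality.

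The computations are routine granted Lemma~\ref{531-lem1}; the one conceptual point—and the crux of the lemma—is that the reflection term $-\bar a_{m,n}z^{1-m+n}/(n+1)$ from the $m-n<1$ branch injects the positive frequency $1-d$ into $\mathcal{P}[g_d]$ for $d\leq0$, so that $\mathcal{P}[g_{2-d}]$ with $d\geq2$ carries the frequency $d-1$ already present in $\mathcal{P}[g_d]$. This is exactly the mechanism by which orthogonality across distinct spaces $\mathcal{F}_d$ breaks down precisely on the line $d_1+d_2=2$, and I expect isolating and justifying this shared-frequency phenomenon, rather than the arithmetic of the case analysis, to be the main obstacle.
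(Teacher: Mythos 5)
Your proposal is correct and follows essentially the same route as the paper: both read off from Lemma~\ref{531-lem1} that $\mathcal{P}[g_d]$ carries only the angular frequency $d-1$ for $d\geq1$ and the pair $\{d-1,\,1-d\}$ for $d\leq0$, reduce orthogonality to $\int_0^{2\pi}e^{ik\phi}\,\mathrm{d}\phi=0$ via the same three-case analysis, and exhibit a nonvanishing cross integral when $d_1+d_2=2$ (your value $\tfrac16 a_{2,0}a_{0,0}$ for $d=2$ agrees with the paper's formula (\ref{61-1-eq3})). The only difference is presentational: you phrase the case check as disjointness of frequency sets, while the paper multiplies out the series explicitly.
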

\begin{proof}
Fix $z=re^{it}\in\ID$. It follows from (\ref{531-lem1-1}) in Lemma \ref{531-lem1} that
\be\label{pgd}\mathcal{P}[g_d](z)=\left\{
\begin{array}
{r@{\ }l}
-\sum\limits_{n=0}^{\infty}\frac{b_n(1-|z|^{2n+2})z^{d-1}}{n+1}, \ \ \ \ \ \ \    & \mbox{if}\ \ \ d\geq1\\
\\
\sum\limits_{n=-d}^{\infty}\left(\frac{b_n|z|^{2n+2}z^{d-1}}{n+1}-\frac{\bar{b}_nz^{1-d}}{n+1}\right), \ \ \ \ \ \ &  \mbox{if}\ \ \  d<1.
\end{array}\right.\ee

Let $d_1$ and $d_2$ be two different integers. We should divide our proof into three cases.
\begin{description}
  \item[Case 1] $d_1$, $d_2\geq1$ and $d_1\neq d_2$.
\end{description}
Elementary calculations show that
$$ \mathcal{P}[g_{d_1}](z)\overline{\mathcal{P}[g_{d_2}](z)}=\sum_{n,\, l\geq0}\frac{b_{n}\bar{b}_l}{(n+1)(l+1)}(1-r^{2n+2})(1-r^{2l+2})r^{d_1+d_2-2}e^{i(d_1-d_2)t}.$$
It is easy to see that $\int_{\ID}\mathcal{P}[g_{d_1}](z)\overline{\mathcal{P}[g_{d_2}](z)}\mathrm{d}A(z)=0$,
since $\int_0^{2\pi}e^{ikt}\mathrm{d}t=0$ for any $k\neq0$.

\begin{description}
  \item[Case 2] $d_1$, $d_2<1$ and $d_1\neq d_2$.
\end{description}
Elementary calculations show that
\begin{align*}
  \mathcal{P}[g_{d_1}](z)\overline{\mathcal{P}[g_{d_2}](z)}& =\sum_{n\geq -d_1,\, l\geq-d_2}\frac{1}{(n+1)(l+1)}\bigg\{b_{n}\bar{b}_lr^{2n+2l+d_1+d_2+2}e^{i(d_1-d_2)t}\\
  &\ \ \ - b_nb_lr^{2n+d_1-d_2+2}e^{i(d_1+d_2-2)t} \\
  &\ \ \ -\bar{b}_{n}\bar{b}_lr^{2l+d_2-d_1+2}e^{i(2-d_1-d_2)t}\\
   &\ \ \ +\bar{b}_nb_lr^{2-d_1-d_2}e^{i(d_2-d_1)t}\bigg\}.\\
\end{align*}
It follows from the assumption of this case that $d_1+d_2\neq2$ and $d_1-d_2\neq0$. Therefore (\ref{orthogonal-1}) holds true,
and again, we have $\mathcal{P}[g_{d_1}](z)$ and $\mathcal{P}[g_{d_2}](z)$ are orthogonal in this case.

\begin{description}
  \item[Case 3] $d_1\geq1$ and $d_2<1$.
\end{description}
Elementary calculations show that
\begin{align}\label{61-1-eq2}
  \mathcal{P}[g_{d_1}](z)\overline{\mathcal{P}[g_{d_2}](z)}& =-\sum_{n\geq0,\, l\geq-d_2}\left\{b_n\bar{b}_l\frac{(1-r^{2n+2})r^{2l+d_1+d_2}}{(n+1)(l+1)}e^{i(d_1-d_2)t}\right. \\\nonumber
  &\ \ \ \left.-b_nb_l\frac{(1-r^{2n+2})r^{d_1-d_2}}{(n+1)(l+1)}e^{i(d_1+d_2-2)t}\right\}.
\end{align}

If $d_1+d_2-2\neq0$, then $\int_{\ID}\mathcal{P}[g_{d_1}](z)\overline{\mathcal{P}[g_{d_2}](z)}\mathrm{d}A(z)=0$.

If $d_1+d_2-2=0$, then
\begin{align}\label{61-1-eq3}
  \nonumber\int_{\ID}\mathcal{P}[g_{d_1}](z)\overline{\mathcal{P}[g_{d_2}](z)}\mathrm{d}A(z) & = 2\sum_{n\geq0,\, l\geq-d_2}\frac{b_n{b}_l}{(n+1)(l+1)}\int_0^1(1-r^{2n+2})r^{d_1-d_2+1}\mathrm{d}r\\
   &=\sum_{n\geq0,\, l\geq-d_2}\frac{b_n{b}_l}{(2-d_2)(n+3-d_2)(l+1)}.
\end{align}
This shows that $\mathcal{P}[g_{d_1}](z)$ and $\mathcal{P}[g_{d_2}](z)$ are not orthogonal functions in this case.
\end{proof}

\subsection{A representation formula for  $\mathcal{P}[g_d]$.}

\begin{lem}\label{lem-1-cgd-conju}
Suppose $w=\rho e^{i\theta}\in\ID$ and let $g_d(w)=f_d(\rho)e^{i{d}\theta}$ be given by $(\ref{6-28-gd})$, where $d$ is an integer. Then
\be\label{611-3}\mathfrak{C}[g_d](z)=\left\{
\begin{array}
{r@{\ }l}
-2z^{d-1}\int_0^{|z|}\frac{f_d(\rho)}{\rho^{d-1}}\mathrm{d}\rho\ , \ \ \ \   &\mbox{for}\ \  \ d\leq 0\\
\\
2z^{d-1}\int_{|z|}^1\frac{f_d(\rho)}{\rho^{d-1}}\mathrm{d}\rho, \ \ \ \ &\mbox{for}\ \ \  d\geq1.
\end{array}\right.\ee
\end{lem}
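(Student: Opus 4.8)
The plan is to reduce the computation to the single-monomial case that was already carried out inside the proof of Lemma~\ref{531-lem1}. Since $\mathfrak{C}$ is complex-linear in its argument and $g_d$ is, by $(\ref{6-28-gd})$, a \emph{finite} linear combination of monomials $a_{n+d,n}w^{n+d}\bar w^{n}$, it suffices to apply the known monomial formula termwise and then re-assemble the result. The key observation is that for each such monomial the exponents are $m=n+d$, $n$, so that $m-n=d$. Consequently the dichotomy $m-n\geq 1$ versus $m-n<1$ appearing in $(\ref{531-lem1-eq1})$ becomes precisely the dichotomy $d\geq 1$ versus $d\leq 0$ of the present statement. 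Because only finitely many coefficients are nonzero, every series below is a finite sum, so interchanging summation and integration is automatic; this is what makes the whole argument purely computational.

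First I would treat the case $d\geq 1$, where $\max\{0,-d\}=0$. Here the first branch of $(\ref{531-lem1-eq1})$ gives $\mathfrak{C}[a_{n+d,n}w^{n+d}\bar w^{n}](z)=a_{n+d,n}z^{d-1}(1-|z|^{2n+2})/(n+1)$, and summing over $n\geq 0$ yields $\mathfrak{C}[g_d](z)=z^{d-1}\sum_{n\geq 0}a_{n+d,n}(1-|z|^{2n+2})/(n+1)$. It then remains to recognize this series as the claimed integral. Since $f_d(\rho)/\rho^{d-1}=\sum_{n\geq 0}a_{n+d,n}\rho^{2n+1}$ and $\int_{|z|}^1\rho^{2n+1}\,\mathrm{d}\rho=(1-|z|^{2n+2})/(2n+2)$, one gets $2\int_{|z|}^1 f_d(\rho)\rho^{1-d}\,\mathrm{d}\rho=\sum_{n\geq 0}a_{n+d,n}(1-|z|^{2n+2})/(n+1)$, which matches after multiplication by $z^{d-1}$. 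The factor $2$ is exactly the algebraic identity $1/(n+1)=2/(2n+2)$, reflecting the normalization of $\mathrm{d}A$ already absorbed into $(\ref{531-lem1-eq1})$.

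The case $d\leq 0$ is entirely parallel, now with $\max\{0,-d\}=-d$. The second branch of $(\ref{531-lem1-eq1})$ gives $\mathfrak{C}[a_{n+d,n}w^{n+d}\bar w^{n}](z)=-a_{n+d,n}z^{d-1}|z|^{2n+2}/(n+1)$, whence $\mathfrak{C}[g_d](z)=-z^{d-1}\sum_{n\geq -d}a_{n+d,n}|z|^{2n+2}/(n+1)$. Using $\int_0^{|z|}\rho^{2n+1}\,\mathrm{d}\rho=|z|^{2n+2}/(2n+2)$ one identifies this series with $-2z^{d-1}\int_0^{|z|} f_d(\rho)\rho^{1-d}\,\mathrm{d}\rho$, as required. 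I would note, though it is not logically needed, that for $n\geq -d$ the combined exponent $2n+d+1\geq 1$, so the seemingly singular prefactor $z^{d-1}$ is compensated by the high-order vanishing of the integral and $\mathfrak{C}[g_d]$ extends continuously to $z=0$.

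There is no genuine obstacle in this lemma: the substantive content — evaluating the Cauchy kernel against a single monomial by the residue theorem — has already been established in $(\ref{531-lem1-eq1})$. The only points requiring care are aligning the two case-splits with the correct lower summation index $\max\{0,-d\}$ and tracking the factor $2$ and the power $\rho^{d-1}$ when rewriting the finite sum as the stated radial integral.
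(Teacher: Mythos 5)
Your proof is correct. The only substantive difference from the paper is organizational: the paper proves Lemma~\ref{lem-1-cgd-conju} by a direct computation, writing $\mathfrak{C}[g_d](z)$ as a radial integral of $f_d(\rho)I_{d,\rho}(z)$, evaluating $I_{d,\rho}(z)$ by the residue theorem in the two regimes $\rho<|z|$ and $\rho>|z|$, and keeping $f_d$ general throughout; you instead invoke the already-proved monomial formula $(\ref{531-lem1-eq1})$ from Lemma~\ref{531-lem1}, observe that every monomial in $g_d$ has $m-n=d$ so the two case-splits coincide, and reassemble the finite sum into the stated integral via $\int\rho^{2n+1}\,\mathrm{d}\rho$. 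Your route is more economical and avoids repeating the contour computation; its only cost is that it is tied to $g_d$ being a finite linear combination of monomials (which is exactly the setting of $(\ref{6-28-gd})$, so nothing is lost here), whereas the paper's direct argument formally applies to an arbitrary radial profile $f_d$ without a density step. The bookkeeping you flag as delicate --- the lower summation index $\max\{0,-d\}$, the factor $2=\tfrac{2n+2}{n+1}$ coming from the normalized area measure, and the power $\rho^{1-d}$ --- is handled correctly in both cases.
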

\begin{proof}
Recall that the Cauchy integral operator is given as follows:
$$\mathfrak{C}[g_d](z)=\int_{\ID}\frac{g_d(w)}{w-z}\mathrm{d} A(w).$$
Let $w=\rho e^{i\theta}\in\ID$. Then
\begin{align}\label{cau-conjugate-622}
 \mathfrak{C}[g_d](z) & =\frac{1}{\pi}\int_0^1\rho \mathrm{d}\rho\int_0^{2\pi}\frac{f_d(\rho)e^{i\mathrm{d}\theta}}{\rho e^{i\theta}-z}\mathrm{d}\theta \\\nonumber
   &=\frac{1}{\pi }\int_0^{|z|} f_d(\rho)I_{d,\, \rho}(z)\mathrm{d}\rho+\frac{1}{\pi}\int_{|z|}^1 f_d(\rho)I_{d,\, \rho}(z)\mathrm{d}\rho,
\end{align}
where
$$I_{d,\, \rho}(z)=\int_0^{2\pi} \frac{e^{id\theta}}{e^{i\theta}-z/\rho}\mathrm{d}\theta.$$

We now calculate $I_{d,\, \rho}(z)$ as follows:
Let $\zeta=e^{it}\in\mathbb{T}$. It follows from Cauchy residue theorem that
$$I_{d,\, \rho}(z)=\frac{1}{i}\int_{|\zeta|=1}\frac{\zeta^{d-1}}{\zeta-z/\rho}\mathrm{d}\zeta.$$

If $|z|/\rho>1$, then for $d\geq1$, we have $I_{d,\, \rho}(z)=0$, while for $d\leq0$, we have
$$I_{d,\, \rho}(z)=2\pi\mbox{Res}\left[\frac{1}{(\zeta-z/\rho)\zeta^{1-d}},0\right]=-2\pi\left(\frac{z}{\rho}\right)^{d-1}.$$

If $|z|/\rho<1$, then for $d\geq1$, we have $I_{d,\, \rho}(z)=2\pi\left(z/\rho\right)^{d-1}$,
while for $d\leq0$, we have
$$ I_{d,\, \rho}(z) =2\pi\left(\mbox{Res}\left[\frac{1}{(\zeta-z/\rho)\zeta^{1-d}},0\right]+\mbox{Res}\left[\frac{1}{(\zeta-z/\rho)\zeta^{1-d}},z/\rho\right]\right)=0.$$
Based on the above discussions together with (\ref{cau-conjugate-622}), we know (\ref{611-3}) holds true.
\end{proof}

\begin{lem}\label{lem-2-jgd-conju}
Suppose $w=\rho e^{i\theta}\in\ID$ and let $g_d(w)=f_d(\rho)e^{i{d}\theta}$ be given by $(\ref{6-28-gd})$, where $d$ is an integer. Then
\be\label{611-5}\mathfrak{J}_0[\overline{g_d}](z)=\left\{
\begin{array}
{r@{\ }l}
2z^{1-d}\int_0^{1}\rho^{1-d}\bar{f}_d(\rho)\mathrm{d}\rho\ , \ \ \ \ &\mbox{for}\ \ \ d\leq 0\\
\\
0, \ \ \ \ & \mbox{for}\ \ \  d\geq1.
\end{array}\right.\ee
\end{lem}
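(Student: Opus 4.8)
The plan is to compute $\mathfrak{J}_0[\overline{g_d}](z)$ directly by contour integration, in exact parallel with the proof of Lemma \ref{lem-1-cgd-conju}. Writing $w=\rho e^{i\theta}$ and recalling that $\overline{g_d(w)}=\bar f_d(\rho)e^{-id\theta}$, I would first separate the radial and angular variables in the definition of $\mathfrak{J}_0$:
\[
\mathfrak{J}_0[\overline{g_d}](z)=\frac{z}{\pi}\int_0^1\rho\,\bar f_d(\rho)\,\mathrm{d}\rho\int_0^{2\pi}\frac{e^{-id\theta}}{1-\rho e^{-i\theta}z}\,\mathrm{d}\theta.
\]
Setting $\zeta=e^{i\theta}$ and using $1-\rho e^{-i\theta}z=(\zeta-\rho z)/\zeta$ together with $\mathrm{d}\theta=\mathrm{d}\zeta/(i\zeta)$ turns the inner integral into $\frac{1}{i}\int_{|\zeta|=1}\zeta^{-d}/(\zeta-\rho z)\,\mathrm{d}\zeta$, the factor $\zeta$ cancelling cleanly; this is the step that exposes the structure of the problem.

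The heart of the matter is then a residue count for $R(\zeta)=\zeta^{-d}/(\zeta-\rho z)$. Since $\rho|z|<1$, the simple pole at $\zeta=\rho z$ always lies inside $\mathbb{T}$, with residue $(\rho z)^{-d}$. When $d\leq 0$ the factor $\zeta^{-d}$ is a polynomial, so $\zeta=\rho z$ is the only pole and the contour integral equals $2\pi(\rho z)^{-d}$; substituting back and pulling $z^{-d}$ and $\rho^{-d}$ out of the radial integral yields $2z^{1-d}\int_0^1\rho^{1-d}\bar f_d(\rho)\,\mathrm{d}\rho$, which is the asserted value. When $d\geq 1$ there is an additional pole of order $d$ at the origin, and the whole claim reduces to showing that its residue exactly cancels the one at $\rho z$.

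The step I expect to demand the most care is precisely this cancellation for $d\geq 1$. The cleanest route is to expand $1/(\zeta-\rho z)=-\sum_{k\geq 0}\zeta^k/(\rho z)^{k+1}$ near the origin; the coefficient of $\zeta^{-1}$ in $\zeta^{-d}/(\zeta-\rho z)$ comes from the term $k=d-1$ and equals $-(\rho z)^{-d}$, exactly the negative of the residue at $\rho z$. Equivalently, one observes that for $d\geq 1$ the integrand decays like $\zeta^{-d-1}=O(\zeta^{-2})$ at infinity, so its residue at infinity vanishes and the sum of its two finite residues, both enclosed by $\mathbb{T}$, must be zero. Either way the contour integral vanishes, giving $\mathfrak{J}_0[\overline{g_d}](z)=0$ for $d\geq 1$ and completing the two cases. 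The remaining manipulations, namely the elementary radial integral and the bookkeeping of the powers of $z$ and $\rho$, are routine.
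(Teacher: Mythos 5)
Your proposal is correct and follows essentially the same route as the paper: separate the radial and angular variables, substitute $\zeta=e^{i\theta}$ to get $\frac{1}{i}\int_{|\zeta|=1}\zeta^{-d}/(\zeta-\rho z)\,\mathrm{d}\zeta$, and evaluate by residues, with the pole at the origin cancelling the one at $\rho z$ when $d\geq 1$. The only difference is that you spell out the cancellation explicitly where the paper simply records the result as an ``elementary calculation.''
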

\begin{proof}
Recall that
$$\mathfrak{J}_0[g_d](z)=\int_{\ID}\frac{z}{1-\bar{w}z}g_d(w)\mathrm{d}A(w).$$

Suppose $w=\rho e^{i\theta}\in\ID$. Then
\begin{align}\label{622-3-conu-0}
 \mathfrak{J}_0[\overline{g_d}](z) & =\frac{z}{\pi}\int_0^1\rho \mathrm{d}\rho\int_0^{2\pi}\frac{\bar{f}_d(\rho)e^{-id\theta}}{1-\rho e^{-i\theta}z}\mathrm{d}\theta \\\nonumber
   & =\frac{z}{\pi}\int_0^1\bar{f}_d(\rho)J_{d, \rho}(z)\rho \mathrm{d}\rho,
\end{align}
where
$$J_{d,\, \rho}(z)=\int_0^{2\pi}\frac{e^{-id\theta}}{1-\rho e^{-i\theta}z}\mathrm{d}\theta.$$
Let $\zeta=e^{it}\in\mathbb{T}$. It follows from Cauchy residue theorem that
$$J_{d,\, \rho}(z)=\frac{1}{i}\int_{|\zeta|=1}\frac{\zeta^{-d}}{\zeta-\rho z}\mathrm{d}\zeta.$$
Sice $z\in \ID$, elementary calculations show that
\be\label{625-4}J_{d,\, \rho}(z)=\left\{
\begin{array}
{r@{\ }l}
2\pi (\rho z)^{-d},\ \ \  &\mbox{for}\ \ \ d\leq 0\\
\\
0,\ \ \  &\mbox{for}\ \ \ d\geq1.
\end{array}\right.\ee
The desired equality (\ref{611-5}) now easily follows from (\ref{622-3-conu-0}) and (\ref{625-4}).
\end{proof}

According to Lemma \ref{lem-1-cgd-conju} and Lemma \ref{lem-2-jgd-conju}, we see that
\be\label{ope-P-gd}\mathcal{P}[g_d](z)=\left\{
\begin{array}
{r@{\ }l}
2z^{d-1}\int_0^{|z|}\rho^{1-d}f_d(\rho)\mathrm{d}\rho-2z^{1-d}\int_0^{1}\rho^{1-d}\bar{f}_d(\rho)\mathrm{d}\rho, \ \ \ & \mbox{for}\ \  d\leq 0\\
\\
-2z^{d-1}\int_{|z|}^1\rho^{1-d}f_d(\rho)\mathrm{d}\rho, \ \ \ & \mbox{for}\ \  d\geq1.
\end{array}\right.\ee

\subsection{Hardy's type inequalities}

Suppose $m\in C^1(a, b)$, is a continuously differentiable function in $(a, b)$. Denote by $L_m^p(a, b)$ the space of functions $h$ defined in $(a, b)$ which satisfy the following condition
$$\|h\|_p=\left(\int_a^b|h(t)|^pm(t)\mathrm{d}t\right)^{1/p}<\infty, \ \ \ \mbox{for}\ \ \ 0<p<\infty.$$
The following result was due to Boyd (see \cite[Theorem 1]{Boyd}), which is useful in proving our Theorem \ref{general-2norm}.

\begin{Thm}\label{Boya}$($\cite[Theorem 1]{Boyd}$)$
Suppose that $w(x)$, $m(x)\in C^1(a, b)$, that $w(x)>0$ a.e. and $m(x)>0$ for $a<x<b$, that $p>0$, $r>1$, $0\leq q<r$, and that the following operator
\be\label{630-ope-T}T[h](x)=w(x)^{1/p}m(x)^{-1/p}\int_a^xh(t)\mathrm{d}t,\ee
is compact from $L_m^r(a, b)\to L_m^s(a, b)$ $(s=pr/(r-q))$. Then the following eigenvalue problem $(\ref{boyd-pde})$ has solutions ($y, \lambda$) with $y\in C^2(a, b)$ and
$y(x)>0$, $y'(x)>0$ in $(a, b)$.
\be\label{boyd-pde}\left\{
\begin{array}
{r@{\ }l}
&(i)\ \  \frac{d}{dx}\bigg(r\lambda (y')^{r-1}m-qy^py'^{q-1}w\bigg) +py^{p-1}(y')^{q}w=0; \\
\\
&(ii)\ \ \lim_{x\to a}y(x)=0\ \ \ \mbox{and}\ \ \ \lim_{x\to b} (r\lambda (y')^{r-1}m-qy^p(y')^{q-1}w)=0;\\
\\
&(iii)\ \ \|y'\|_r=1.
\end{array}\right.\ee
There is a largest value $\lambda$ such that $(\ref{boyd-pde})$ has a solution and if $\lambda^*$ denotes this value, then for any $h\in L_m^r(a, b)$,
\be\label{boyd-eq1}
\int_a^b\left|\int_a^xh(t)\mathrm{d}t\right|^p|h(x)|^qw(x)\mathrm{d}x\leq\frac{r\lambda^*}{p+q}\left(\int_a^b|h(x)|^rm(x)\mathrm{d}x\right)^{(p+q)/r}.
\ee
Equality holds in $(\ref{boyd-eq1})$ if and only if $h=cy'$ a.e. where $y$ is a solution of $(\ref{boyd-pde})$ corresponding to $\lambda=\lambda^*$, and $c$ is any constant.
\end{Thm}

The following lemma is one of the main tools in proving our result.

\begin{lem}\label{lem-628-boyd-d}
Let $d$ be an integer and suppose $u\in L_m^2(0, 1)$ is a real or complex function, where $m(x) = x$. Then
for $d\leq0$, we have
\be\label{be-j1}\int_0^1\left|\int_0^r \rho^{1-d}u(\rho)\mathrm{d}\rho\right|^2r^{2d-1}\mathrm{d}r\leq \frac{1}{j_{-d}^2}\int_0^1\rho |u(\rho)|^2\mathrm{d}\rho,\ee
and for $d\geq1$, we have
\be\label{be-j2}\int_0^1\left|\int_r^1\rho^{1-d}u(\rho)\mathrm{d}\rho\right|^2r^{2d-1}\mathrm{d}r\leq \frac{1}{j_{d-1}^2}\int_0^1\rho |u(\rho)|^2\mathrm{d}\rho,\ee
where $j_d$ is the smallest positive zero of the Bessel function $J_d(x)$.
\end{lem}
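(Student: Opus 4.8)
The plan is to derive both inequalities from Boyd's inequality (\ref{boyd-eq1}) in Theorem \ref{Boya}, specialized to the parameters $p=2$, $q=0$, $r=2$ (so that $s=2$ and $(p+q)/r=1$), after the substitution $h(\rho)=\rho^{1-d}u(\rho)$ and the choice of weights $w(x)=m(x)=x^{2d-1}$. With these choices the operator (\ref{630-ope-T}) reduces to $T[h](x)=\int_0^x h$ and (\ref{boyd-eq1}) reads
\[
\int_0^1\Big|\int_0^x h(t)\,dt\Big|^2 x^{2d-1}\,dx\le \lambda^{*}\int_0^1 |h(x)|^2 x^{2d-1}\,dx .
\]
Since $\int_0^1|h|^2x^{2d-1}\,dx=\int_0^1\rho|u|^2\,d\rho$ and the left-hand side is exactly the left-hand side of (\ref{be-j1}), everything is reduced to computing the best constant $\lambda^{*}$ and checking that it equals $1/j_{-d}^2$.

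For $d\le 0$ I would apply Theorem \ref{Boya} directly, since $T$ is then the genuine forward Hardy operator. I would then solve the eigenvalue problem (\ref{boyd-pde}): with $q=0$, $p=r=2$ and $w=m=x^{2d-1}$, equation (i) collapses to $\lambda\,(x^{2d-1}y')'+x^{2d-1}y=0$, i.e.
\[
y''+\frac{2d-1}{x}\,y'+\frac1\lambda\,y=0 ,
\]
which is a Bessel equation with regular solution $y(x)=x^{1-d}J_{1-d}(x/\sqrt\lambda)$. The condition $\lim_{x\to0}y(x)=0$ selects this regular solution over the second-kind one, and using $\frac{d}{dx}\big[x^{1-d}J_{1-d}(x/\sqrt\lambda)\big]=\frac1{\sqrt\lambda}x^{1-d}J_{-d}(x/\sqrt\lambda)$ the second condition in (\ref{boyd-pde})(ii) becomes $y'(1)=0$, i.e. $J_{-d}(1/\sqrt\lambda)=0$. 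Because the largest admissible $\lambda$ comes from the smallest positive zero, $1/\sqrt{\lambda^{*}}=j_{-d}$, which gives (\ref{be-j1}).

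For $d\ge 1$ the relevant map is the backward Hardy operator $A[h](r)=\int_r^1 h$, which is not of the form covered by Theorem \ref{Boya}, so I would pass to it by duality in the Hilbert space $L^2(x^{2d-1}\,dx)$. A Fubini computation identifies the adjoint as $A^{*}[g](t)=t^{1-2d}\int_0^t g(r)r^{2d-1}\,dr$, whence $\|A\|=\|A^{*}\|$ and it suffices to bound $A^{*}$. Writing $v(\rho)=g(\rho)\rho^{d-1}$ converts the estimate $\|A^{*}g\|\le C\|g\|$ into a forward Hardy inequality with outer weight $x^{1-2d}$, which is precisely inequality (\ref{be-j1}) for the index $d'=1-d\le 0$; that case was settled in the previous paragraph and produces the constant $1/j_{-d'}^2=1/j_{d-1}^2$. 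Unwinding the substitution $h=\rho^{1-d}u$ then yields (\ref{be-j2}).

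The main obstacle is not the computation but the verification that Theorem \ref{Boya} genuinely applies and that the spectral picture is correct at the singular endpoint. Concretely, for $d\le 0$ the weight $x^{2d-1}$ has exponent $\le -1$, so I must check the compactness hypothesis of Theorem \ref{Boya} for this singular weight (e.g. via a Muckenhoupt-type criterion), and I must justify the Sturm--Liouville spectral step asserting that the largest eigenvalue corresponds to the smallest Bessel zero — this requires a careful analysis of the admissible (positive, increasing) solutions near $x=0$, which is where the boundary behaviour of $J_{1-d}$ versus the second-kind solution is decisive. A secondary technical point is making the duality in the last paragraph rigorous, namely proving the adjoint identity on a dense class and transferring boundedness of $A^{*}$ back to $A$.
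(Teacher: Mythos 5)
Your treatment of the case $d\le 0$ coincides with the paper's: both apply Boyd's result (Theorem~A) with $p=r=2$, $q=0$, $h=\rho^{1-d}u$, $w=m=x^{2d-1}$, solve the resulting Bessel equation, and identify $\lambda^{*}=1/j_{-d}^2$ from the regular solution $y(x)=x^{1-d}J_{1-d}(x/\sqrt{\lambda})$; in fact your derivation of the endpoint condition $J_{-d}(1/\sqrt{\lambda})=0$ via $y'(1)=0$ is spelled out more explicitly than in the paper, which only invokes the condition $y(0)=0$. For $d\ge 1$ the two arguments genuinely diverge. The paper stays inside Boyd's framework by the reflection $x\mapsto 1-x$, which turns $\int_r^1$ into a forward Hardy operator with weight $(1-x)^{2d-1}$, and then solves a second Sturm--Liouville problem to extract $1/j_{d-1}^2$. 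You instead compute the adjoint of $A[h](r)=\int_r^1 h$ in $L^2(x^{2d-1}\,dx)$, namely $A^{*}[g](t)=t^{1-2d}\int_0^t g(r)\,r^{2d-1}\,dr$, and observe that after the substitution $u=g\rho^{d-1}$ the bound on $A^{*}$ is exactly (\ref{be-j1}) with $d'=1-d\le 0$, whose constant $1/j_{-d'}^2=1/j_{d-1}^2$ transfers back through $\|A\|=\|A^{*}\|$. The bookkeeping checks out: $|t^{1-2d}|^2\,t^{2d-1}=t^{1-2d}$ and $\int_0^1|g|^2\rho^{2d-1}\,d\rho=\int_0^1\rho\,|u|^2\,d\rho$, so the reduction is exact, the constant is preserved, and sharpness survives because duality preserves the operator norm. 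Your route buys the elimination of a second ODE computation and a second compactness verification; its only costs are the routine justification of the adjoint identity on a dense class and the single compactness check for the singular weight $x^{2d-1}$, $d\le 0$, which you flag but do not carry out --- the paper settles it by the kernel criterion, obtaining $\|T\|=\big(4(1-d)\big)^{-1/2}<\infty$.
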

\bpf
Suppose $d\leq0$. To prove (\ref{be-j1}), we will use Theorem \Ref{Boya} to find the best constant $\lambda$ such that
$$\int_0^1\left|\int_0^r \rho^{1-d}u(\rho)\mathrm{d}\rho\right|^2r^{2d-1}\mathrm{d}r\leq \lambda\int_0^1\rho |u(\rho)|^2\mathrm{d}\rho,$$
where $\lambda=\frac{1}{j_{-d}^2}$.

In our case, $a=0$, $b=1$, $p=r=2$, $q=0$, $h(x)=x^{1-d}u(x)$, $w(x)=x^{2d-1}=m(x)$. The equality holds if $h=cy'$, where
$y$ is a solution to the following ordinary differential equation:
$$\frac{d}{dx}(2\lambda y'(x)x^{2d-1})+2y(x)x^{2d-1}=0.$$
The general solution to the previous ODE is $$h(x) = c_1 x^{1-d} J_{1-d} \left(\frac{x}{\sqrt{\lambda}}\right)+c_2 x^{1-d}\mathcal{N}_{d-1}\left(\frac{x}{\sqrt{\lambda}}\right),$$
where $J_{1-d}$ and $\mathcal{N}_{d-1}$ are Bessel functions given by (\ref{Bessel-function}) and (\ref{Bessel-function2}), respectively.
Therefore, in view of the initial condition $h(0)=0$ (by Theorem~A), we have
$$h(x)=c x^{1-d}J_{1-d}\left(\frac{x}{\sqrt{\lambda}}\right),$$
where $c>0$ is a constant.
This shows that in this case the best constant is $\lambda=\frac{1}{j_{-d}^2}$, where $j_{-d}$ is the smallest positive zero of $J_{-d}(x)$.

Moreover, in order to use Theorem \Ref{Boya}, we should prove that  the operator $T$, which is defined by (\ref{630-ope-T}), is compact.
A simple sufficient condition for $T$ to be compact from $L_m^r(a, b)\to L_m^{s'}(a, b)$, where $s'=pr/(r-q)$, is that
$$\|T\|=\bigg\{\int_a^b\left(\int_a^bk(x, t)^{r'}m(t)\mathrm{d}t\right)^{s/r'}m(x)\mathrm{d}x\bigg\}^{1/s}<\infty, \ \ \ s>1,\ \ \ r>1,$$
where $r'=\frac{r}{r-1}$, $s=\frac{s'}{s'-1}$ and
$$k(x, t)=w(x)^{1/p}m(x)^{-1/p}m(t)^{-1}\chi_{[a, x]}(t).$$
For this argument we refer the readers to \cite[Page 319]{Zaa}.

In our case, we have $p=s=s'=r=r'=2$, $a=0$, $b=1$, $w(x)=x^{2d-1}=m(x)$, and thus,
$$\|T\|=\bigg\{\int_0^1\left(\int_0^x(t^{2d-1})^{-1}\mathrm{d}t\right)x^{2d-1}\mathrm{d}x\bigg\}^{1/2}=\left(\frac{1}{4(1-d)}\right)^{1/2}<\infty,$$
since $d\leq0$. Based on the above observations, we have (\ref{be-j1}) holds true.

Next, we prove (\ref{be-j2}) as follows: It is easy to see that (\ref{be-j2}) is equivalent to
$$\int_0^1 \left|\int_0^{r} \frac{u(1-\rho)}{(1-\rho)^{d-1}} \mathrm{d}\rho\right|^2(1-r)^{2d-1} \mathrm{d}r\le \frac{1}{j_d^2}\int_0^1(1-\rho) |u(1-\rho)|^2\mathrm{d}\rho.$$
Again, we will use Theorem \Ref{Boya} to prove the above inequality.

In this case, $a=0$, $b=1$, $p=r=2$, $q=0$, $H(x)=(1-x)^{1-d}u(1-x)$, $W(x)=(1-x)^{2d-1}=M(x)$. We now seek the best constant $\lambda$, such that the following inequality
$$\int_0^1 \left|\int_0^{r} H(\rho) \mathrm{d}\rho\right|^2W(r)^{2d-1} \mathrm{d}r\le \lambda\int_0^1|H(\rho)|^2M(\rho)\mathrm{d}\rho,$$
holds. This can be done by letting $H=cy'$, where
$y$ is a solution of the following ordinary differential equation:
$$\frac{d}{dx}(2\lambda y'(x)(1-x)^{2d-1})+2y(x)(1-x)^{2d-1}=0.$$
Then, again by having in mind the initial condition $y(0)=0$ imposed by Theorem~A,
similar to the proof of the  previous part,  we have
$$y(x)=\frac{J_{d-1}(1/\sqrt{\lambda}-x/\sqrt{\lambda})}{(1-x)^d}.$$
Therefore, $\lambda=\frac{1}{j_{d-1}^2}$, where $j_{d}$ is the smallest positive zero of $J_{d}(x)$.

Finally, we prove the compactness of the operator $T$ as follows: For $d\geq1$,
$$\|T\|=\bigg\{\int_0^1\left(\int_0^x\big[(1-t)^{2d-1}\big]^{-1}\mathrm{d}t\right)(1-x)^{2d-1}\mathrm{d}x\bigg\}^{1/2}=\frac{1}{2\sqrt{d}}<\infty,$$
which shows that $T$ is compact. Therefore, by using Theorem \Ref{Boya}, we see that (\ref{be-j2}) holds true.
The proof is complete.
\epf

For some non-negative integers $d$, we list some (approximate) values of $j_d$ as follows:
\begin{table}[h]
\centerline{\large\begin{tabular}{|c|c|c|c|c|c|c|}
\hline
\ $d$\ &\ 0\ &\ 1\ &\ 2\ &\ 3\ &\ 4\ \\
\hline
\ $j_d$\ &\ 2.4048 \ &\ 3.8317\ &\  5.1356\ &\ 6.3802\ &\ 7.5883\  \\
\hline
\end{tabular}}
\end{table}

The following lemma will be used in the proof of Theorem \ref{general-2norm}, Step 4.
\subsection{Existence of the largest eigenvalue for operator $\mathcal{P}^*\mathcal{P}$}

\begin{lem}\label{step-4-lem6-7-9}
Let $\mathcal{Y}_0$ be a closed linear subspace of $L^2(\ID, \mathrm{d}A)$. Then there exists a constant $\lambda_\circ$ and a mapping $f\in \mathcal{Y}_0$, such that
$$\sup_{\varphi\in \mathcal{Y}_0\setminus\{0\}}\frac{\|\mathcal{P} [\varphi]\|_2^2}{\|\varphi\|_2^2}=\lambda_\circ^2=\frac{\|\mathcal{P} [f]\|_2^2}{\|f\|_2^2}.$$
Moreover, $\lambda_\circ$ is the largest eigenvalue of the operator $T=\mathcal{P}^* \mathcal{P}$. In other words, $$\lambda_\circ=\max\big\{\lambda:\exists\, \varphi\in \mathcal{Y}_0, \ s.t. \ \mathcal{P}^* \mathcal{P} \varphi=\lambda \varphi\big\}.$$
\end{lem}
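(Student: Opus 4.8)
The plan is to exploit that $\mathcal{P}$ is compact and to recast the problem as the variational characterization of the top of the spectrum of a compact self-adjoint operator. The one structural subtlety is that $\mathcal{P}$ is only $\IR$-linear (because of the conjugation in the term $\mathfrak{J}_0[\bar\varphi]$), so I would first regard $L^2(\ID,\mathrm{d}A)$ as a \emph{real} Hilbert space $\mathcal{H}$ equipped with the inner product $\langle f,g\rangle=\real\int_{\ID}f\overline{g}\,\mathrm{d}A(z)$, which induces the usual norm $\|\cdot\|_2$. On $\mathcal{H}$ the map $\mathcal{P}$ is a bounded $\IR$-linear operator, and, as recorded in the Introduction, it is compact. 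Let $\mathcal{P}^\ast$ denote its adjoint with respect to $\langle\cdot,\cdot\rangle$ and put $T=\mathcal{P}^\ast\mathcal{P}$. Then $T$ is compact (a bounded operator composed with a compact one), self-adjoint since $\langle T\varphi,\psi\rangle=\langle\mathcal{P}\varphi,\mathcal{P}\psi\rangle=\langle\varphi,T\psi\rangle$, and positive semidefinite since $\langle T\varphi,\varphi\rangle=\|\mathcal{P}\varphi\|_2^2\ge 0$.

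Next I would restrict to $\mathcal{Y}_0$. Writing $\mathcal{P}_0=\mathcal{P}|_{\mathcal{Y}_0}$, this is again compact, and its $\IR$-adjoint $\mathcal{P}_0^\ast=P_0\mathcal{P}^\ast$ (with $P_0$ the orthogonal projection of $\mathcal{H}$ onto $\mathcal{Y}_0$) yields a compact, self-adjoint, positive operator $T_0=\mathcal{P}_0^\ast\mathcal{P}_0=P_0\,T|_{\mathcal{Y}_0}$ on the real Hilbert space $\mathcal{Y}_0$, with $\|\mathcal{P}\varphi\|_2^2=\langle T_0\varphi,\varphi\rangle$ for every $\varphi\in\mathcal{Y}_0$. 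The quantity to be maximized is thus exactly the Rayleigh quotient of $T_0$. By the spectral theorem for compact self-adjoint operators on a real Hilbert space, $T_0$ admits an orthonormal system of eigenvectors with nonnegative eigenvalues $\mu_1\ge\mu_2\ge\cdots$ accumulating only at $0$, and
\[
\sup_{\varphi\in\mathcal{Y}_0\setminus\{0\}}\frac{\|\mathcal{P}\varphi\|_2^2}{\|\varphi\|_2^2}=\sup_{\|\varphi\|_2=1}\langle T_0\varphi,\varphi\rangle=\mu_1=\|T_0\|,
\]
the crucial point being that compactness forces this supremum to be \emph{attained}, at a unit eigenvector $f\in\mathcal{Y}_0$ with $T_0 f=\mu_1 f$. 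Setting $\lambda_\circ^2=\mu_1$ gives $\lambda_\circ^2=\|\mathcal{P}f\|_2^2/\|f\|_2^2$, which is the first assertion of the lemma.

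Finally I would identify $f$ with a genuine eigenvector of $T$. The relation $T_0 f=\lambda_\circ^2 f$ reads $P_0\,T f=\lambda_\circ^2 f$; on the subspaces to which the lemma is applied (the spans built from the $g_d$, grouped according to the coupling $d\leftrightarrow 2-d$ isolated in Lemma~\ref{orth1-lemma}) the space $\mathcal{Y}_0$ is $T$-invariant, so $P_0\,T|_{\mathcal{Y}_0}=T|_{\mathcal{Y}_0}$ and hence $\mathcal{P}^\ast\mathcal{P}f=\lambda_\circ^2 f$. Because $\lambda_\circ^2$ is the maximum of the Rayleigh quotient over $\mathcal{Y}_0$, it is the largest eigenvalue of $T$ possessing an eigenvector in $\mathcal{Y}_0$, which is the second assertion. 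I expect the only real obstacle to be the careful book-keeping forced by the $\IR$-linearity of $\mathcal{P}$: the adjoint, self-adjointness, positivity and the spectral theorem must all be invoked in the real-Hilbert-space sense rather than the complex one, and one must check the $T$-invariance of $\mathcal{Y}_0$ before promoting the maximizer from an eigenvector of $T_0$ to one of $T$. Once this framework is fixed, compactness of $\mathcal{P}$ does the essential work, guaranteeing both that a maximizing sequence has a strongly convergent image and that the top of the spectrum of $T_0$ is an honest eigenvalue.
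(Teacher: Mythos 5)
Your proposal is correct and follows essentially the same route as the paper: both reduce the statement to the variational characterization of the largest eigenvalue of the compact, self-adjoint, positive operator $T=\mathcal{P}^*\mathcal{P}$, the paper simply unpacking the spectral-theorem step you cite into an explicit maximizing-sequence argument showing $\|(T-\mu I)[f_n]\|_2\to 0$ and hence $f_n\to f$ with $Tf=\mu f$. Your additional care about the real-linear structure of $\mathcal{P}$ and about the $T$-invariance of $\mathcal{Y}_0$ (needed to promote the maximizer from an eigenvector of $P_0\,T|_{\mathcal{Y}_0}$ to one of $T$) addresses points the paper's proof passes over silently.
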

\begin{proof}
Suppose $f\in \mathcal{Y}_0$ and let $\mu =\sup_{\|f\|=1}\left\langle T [f],f\right\rangle$. Then
$$\mu=\sup_{\|f\|=1}\langle \mathcal{P}[f], \mathcal{P}[f]\rangle=\sup_{\|f\|=1}\|\mathcal{P}[f]\|_2^2 >0.$$
Now, assume that $\{f_n\}_{n=1}^{\infty}$ is a sequence of mappings such that $\|f_n\|_2=1$ and $\lim_{n\to \infty} \left\langle T f_n,f_n\right\rangle=\mu$.
Note that $T$ is a compact operator, we have there is a subsequence  $\{f_{n_k}\}_{k=1}^{\infty}$, such that $Tf_{n_{k}} \to g$, where $g\in \mathcal{Y}_0\subset L^2(\ID, \mathrm{d}A)$.

Let $g=\mu f$.
Then
\begin{align*}
  \lim_{n\to \infty }\|(T -\mu I)[f_n] \|_2^2& =\lim_{n\to \infty }\bigg(\left<T[f_n], T[f_n]\right> -2\mu \left<T[f_n],f_n\right> +\mu^2 \|f_n\|_2^2\bigg) \\
   & \leq\mu^2-2\mu^2+\mu^2=0.
\end{align*}

On the other hand, since
 $$\|\mu(f_n-f)\|_2\le \|(T-\mu I)[f_n]\|_2+\|T[f_n]-\mu f\|_2,$$
we see that
$$\|f_n-f\|_2\le \mu^{-1}\bigg(\|(T-\mu I)[f_n]\|_2+\|T[f_n]-\mu f\|_2\bigg),$$
which shows $\lim_{n\to \infty}f_n=f$.

Based on the above observations, the following equalities hold
$$\lim_{n\to \infty } T[f_n] = T[f] = \mu f,$$
because the operator $T$ is continuous.
The assumption of $\mu$ at the beginning ensures that $\mu =\lambda_\circ^2$, the largest eigenvalue of the operator $T=\mathcal{P}^* \mathcal{P}$,
and thus, the proof is complete.
\end{proof}
\section{Proofs of Theorems \ref{general-2norm} $-$  \ref{infinity-norm}}\label{sec-3}
\subsection*{Proof of Theorem \ref{general-2norm}}
Recall that for $w=\rho e^{i\theta}\in\ID$, the following functions
$$\varphi(w)=\sum\limits_{n=0}^{\infty}\sum\limits_{m=0}^{\infty}a_{m, n}w^m\bar{w}^n=\sum_{d=-\infty}^{\infty}g_d(\rho e^{i\theta})$$
are dense in $L^2(\ID,\, \mathrm{d}A)$, where
$$g_d(\rho e^{i\theta})=f(\rho)e^{id\theta}=\sum_{n\geq\max\{0, -d\}}a_{n+d, n}\rho^{2n+d}e^{id\theta}.$$
Then
$$\|\varphi\|_2^2=\sum_{d=-\infty}^{\infty}\|g_d\|_2^2,$$
where
\be\label{gd-def}\|g_d\|_2^2=2\int_0^1|f(\rho)|^2r\mathrm{d}\rho.\ee
Following the proof of \cite[Theorem 1]{Anderson}, if one can find the best constant $M$ such that
$$\|\mathcal{P}[\varphi]\|_2^2\leq M\|\varphi\|_2^2,$$
then $\|\mathcal{P}\|_2^2\leq M$. Furthermore, if there is an extremal function $\varphi_0$ such that $\|\mathcal{P}[\varphi_0]\|_2^2=M\|\varphi_0\|_2^2$,
then $\|\mathcal{P}\|_2^2=M.$

In what follows, we will use Hardy's type inequalities, Bessel functions and Lagrange multiplier methods to find such a constant $M$.
We divide our proof into four steps.

\begin{description}
  \item[Step 1]  {\bf The representation formula for  $\|\mathcal{P}[\varphi]\|_2^2$.}
\end{description}

According to Lemma \ref{orth1-lemma}, we see that
\begin{align*}
  \|\mathcal{P}[\varphi]\|_2^2 & =\int_{\ID}\left|\sum_{d=-\infty}^{\infty}\mathcal{P}[g_d](z)\right|^2dA(z) \\
   & =\sum_{d=-\infty}^{\infty}\|\mathcal{P}[g_d]\|_2^2+2\mbox{Re}\sum_{d\geq2}\langle\mathcal{P}[g_d], \mathcal{P}[g_{2-d}]\rangle.
\end{align*}
Since $\mathcal{P}[\varphi]=-\mathfrak{C}[\varphi]-\mathfrak{J}_0[\bar{\varphi}]$, it follows from Lemma 2.4 that
$$\mathcal{P}[g_d]=-\mathfrak{C}[g_d],\ \ \ \ \mbox{for}\ \ \  d\geq1.$$
Therefore, again by using Lemma \ref{orth1-lemma}, we have
\be\label{630-eq1}\|\mathcal{P}[\varphi]\|_2^2 =\sum_{d\leq0}\|\mathcal{P}[g_d]\|_2^2+\sum_{d\geq1}\|\mathfrak{C}[g_d]\|_2^2-2\mbox{Re}\sum_{d\geq2}\langle\mathfrak{C}[g_d], \mathcal{P}[g_{2-d}]\rangle,\ee
where
\be\label{630-eq2}\sum_{d\leq0}\|\mathcal{P}[g_d]\|_2^2=\sum_{d\leq0}\|\mathfrak{C}[g_d]\|_2^2+\sum_{d\leq0}\|\mathfrak{J}_0[\bar{g}_d]\|_2^2.\ee

Let $z=r e^{it}\in \ID$. The following two equalities are due to Lemma \ref{lem-1-cgd-conju}
\be\label{cauchy-gd-1}\|\mathfrak{C}[g_d](z)\|_2^2=8\int_0^1\left|\int_0^r \rho^{1-d}f_d(\rho)\mathrm{d}\rho \right|^2r^{2d-1}\mathrm{d}r,\ \ \ \mbox{for}\ \ \ d\leq0,\ee
and
\be\label{cauchy-gd-2}\|\mathfrak{C}[g_d](z)\|_2^2=8\int_0^1\left|\int_r^1 \rho^{1-d}f_d(\rho)\mathrm{d}\rho \right|^2r^{2d-1}\mathrm{d}r,\ \ \ \mbox{for}\ \ \ d\geq1,\ee
while Lemma \ref{lem-2-jgd-conju} gives the following equality
\be\label{j0-gd-628}\|\mathfrak{J}_0[\bar{g}_d](z)\|_2^2=\frac{4}{2-d}\left|\int_0^1\rho^{1-d}f_d(\rho)\mathrm{d}\rho\right|^2,\ \ \ \mbox{for}\ \ \ d\leq0.\ee
Moreover, for $d\geq2$, one can deduce from (\ref{611-3}) and (\ref{ope-P-gd}) that
$$\langle\mathfrak{C}[g_d](z), \mathcal{P}[g_{2-d}](z)\rangle=-8\left(\int_0^1\rho^{d-1}{f}_{2-d}(\rho)\mathrm{d}\rho\right)\int_0^1\left(\int_r^1 \rho^{1-d}f_d(\rho)\mathrm{d}\rho\right)r^{2d-1}\mathrm{d}r.$$

\begin{description}
  \item[Step 2] {\bf Estimation of $\|\mathcal{P}[\varphi]\|_2^2$.}
\end{description}

Since
$$2|\mbox{Re}\langle\mathfrak{C}[g_d], \mathcal{P}[g_{2-d}]\rangle|\leq2\mathfrak{C}[g_d]\|_2\|\mathcal{P}[g_{2-d}]\|_2\leq\|\mathfrak{C}[g_d]\|_2^2+\|\mathcal{P}[g_{2-d}]\|_2^2,$$
it follows from (\ref{630-eq1}) and (\ref{630-eq2}) that
\begin{align}\label{pphi-628}
  \|\mathcal{P}[\varphi]\|_2^2 & \leq \|\mathcal{P}[g_0]\|_2^2+\|\mathfrak{C}[g_2]\|_2^2-2\mbox{Re}\langle \mathfrak{C}[g_2], \mathcal{P}[g_0]\rangle\\\nonumber
  &\ \ \ +2\sum_{d\leq-1}\|\mathcal{P}[g_d]\|_2^2+\|\mathfrak{C}[g_1]\|_2^2+2\sum_{d\geq3}\|\mathfrak{C}[g_d]\|_2^2\\\nonumber
  &=\|\mathcal{P}[g_0+g_2]\|_2^2+2\sum\limits_{\substack{d\leq-1\\ d\geq3}}\|\mathfrak{C}[g_d]\|_2^2+\|\mathfrak{C}[g_1]\|_2^2+2\sum_{d\leq-1}\|\mathfrak{J}_0[\bar{g}_d]\|_2^2.
\end{align}

For $d\geq3$, we see from (\ref{gd-def}), (\ref{cauchy-gd-2}) and (\ref{be-j2}) that
$$\|\mathfrak{C}[g_d]\|_2^2\leq \frac{4}{j_{d-1}^2}\|g_d\|_2^2,$$
where $j_d$ is the smallest positive zero of the Bessel function $J_d(x)$ and $4/j_{d-1}^2\leq4/j_2^2\approx0.152.$

While for $d\leq -1$, the equalities (\ref{gd-def}), (\ref{cauchy-gd-1}) and the sharp inequality (\ref{be-j1}) show that
$$\|\mathfrak{C}[g_d]\|_2^2\leq \frac{4}{j_{-d}^2}\|g_d\|_2^2,$$
where $j_{-d}$ is the smallest positive zero of the Bessel function $J_{-d}(x)$ and $4/j_{-d}^2\leq4/j_{1}^2\approx0.272.$

For the case $d=1$, one can easily deduce from (\ref{gd-def}), (\ref{cauchy-gd-2}) and (\ref{be-j2}) that
$$\|\mathfrak{C}[g_1]\|_2^2\leq \frac{4}{j_0^2}\|g_1\|_2^2.$$
To estimate $\|\mathfrak{J}_0[\bar{g}_d]\|_2$, by using H\"older's inequality for integral and (\ref{gd-def}), (\ref{j0-gd-628}), we have for $d\leq-1$,
the following inequalities hold
$$\|\mathfrak{J}_0[\bar{g}_d]\|_2^2=\frac{4}{2-d}\left|\int_0^1\rho^{1-d}f_d(\rho)\mathrm{d}\rho\right|^2\leq\frac{4}{(2-2d)(2-d)}\|g_d\|_2^2\leq\frac{1}{3}\|g_d\|_2^2.$$
Now, by letting
$$M=\max\bigg\{\sup\frac{\|\mathcal{P}[g_0+g_2]\|_2^2}{\|g_0\|_2^2+\|g_2\|_2^2}, \ \ \frac{8}{j_2^2},\ \  \frac{8}{j_1^2},\ \  \frac{4}{j_0^2},\ \ \frac{1}{3}\bigg\},$$
we then have $\|\mathcal{P}[\varphi]\|_2^2\leq M\|\varphi\|_2^2$, and thus $\|\mathcal{P}\|_2^2\leq M$.

In what follows, we will show that, in fact
$$M=\sup\frac{\|\mathcal{P}[g_0+g_2]\|_2^2}{\|g_0\|_2^2+\|g_2\|_2^2}.$$
Moreover, we will find an extremal function $\varphi_0\neq 0$ such that $\|\mathcal{P}[\varphi_0]\|_2^2=M\|\varphi_0\|_2^2$.

\begin{description}
  \item[Step 3] {\bf The proof of}
  $$M\in\left(\frac{4}{j_0^2}, \frac{3}{2}+\frac{2}{j_1^2}\right).$$
\end{description}

It follows from (\ref{ope-P-gd}) that
$$\mathcal{P}[g_0+g_2]=2z^{-1}\int_0^{|z|}\rho f_0(\rho)\mathrm{d}\rho-2z\int_0^1\rho\bar{f}_0(\rho)\mathrm{d}\rho-2z\int_{|z|}^1\rho^{-1}f_2(\rho)\mathrm{d}\rho,$$
and thus
\begin{align}\label{july-4-eq1}
  \|\mathcal{P}[g_0+g_2]\|_2^2& =8\int_0^1\left|\int_0^r \rho f_0(\rho)\mathrm{d}\rho \right|^2r^{-1}\mathrm{d}r+8\int_0^1\left|\int_r^1 \rho^{-1}f_2(\rho)\mathrm{d}\rho \right|^2r^{3}\mathrm{d}r\\\nonumber
   &\ \ \ +2\left|\int_0^1\rho f_0(\rho)\mathrm{d}\rho\right|^2\\\nonumber
   &\ \ \ +16\mbox{Re}\left(\int_0^1\rho{f}_{0}(\rho)\mathrm{d}\rho\right)\int_0^1\left(\int_r^1 \rho^{-1}f_2(\rho)\mathrm{d}\rho\right)r^{3}\mathrm{d}r,
\end{align}
where
$$f_0(\rho)=\sum_{n\geq0}a_{n, n}\rho^{2n}\ \ \ \mbox{and}\ \ \ f_2(\rho)=\sum_{n\geq0}a_{n+2, n}\rho^{2n+2}.$$

We now give a quick estimate for (\ref{july-4-eq1}) as follows:
By using H\"older's inequality for integral and (\ref{be-j2}), we have
\begin{align*}
  &16\mbox{Re}\left(\int_0^1\rho{f}_{0}(\rho)\mathrm{d}\rho\right)\int_0^1\left(\int_r^1 \rho^{-1}f_2(\rho)\mathrm{d}\rho\right)r^{3}\mathrm{d}r\\
   & \leq 16\left(\frac{1}{2}\int_0^1\rho|f_0(\rho)|^2\mathrm{d}\rho\right)^{1/2}\left(\frac{1}{4}\int_0^1\left|\int_r^1 \rho^{-1}f_2(\rho)\mathrm{d}\rho\right|^2r^{3}\mathrm{d}r\right)^{1/2}\\
   &\leq 16\left(\frac{1}{4}\|g_0\|_2^2\right)^{1/2}\left(\frac{1}{8}\frac{1}{j_1^2}\|g_2\|_2^2\right)^{1/2}\\
   &=\frac{2\sqrt{2}}{j_1}\|g_0\|_2\|g_2\|_2,
\end{align*}
and
$$8\int_0^1\left|\int_0^r \rho f_0(\rho)\mathrm{d}\rho \right|^2r^{-1}\mathrm{d}r+2\left|\int_0^1\rho f_0(\rho)\mathrm{d}\rho\right|^2 \leq3\int_0^1\rho |f_0(\rho)|^2\mathrm{d}\rho=\frac{3}{2}\|g_0\|_2^2.$$
On the other hand, it follows from (\ref{be-j2}) and (\ref{gd-def}) that
$$8\int_0^1\left|\int_r^1 \rho^{-1}f_2(\rho)\mathrm{d}\rho \right|^2r^{3}\mathrm{d}\rho\leq\frac{8}{j_1^2}\int_{0}^1\rho|f_2(\rho)|^2\mathrm{d}\rho=\frac{4}{j_1^2}\|g_2\|_2^2.$$
Then
\begin{align*}
 \|\mathcal{P}[g_0+g_2]\|_2^2& \leq\frac{3}{2}\|g_0\|_2^2+\frac{4}{j_1^2}\|g_2\|_2^2+\frac{2\sqrt{2}}{j_1}\|g_0\|_2\|g_2\|_2 \\
   & \leq \left(\frac{3}{2}+\frac{2}{j_1^2}\right)\left(\|g_0\|_2^2+\|g_2\|_2^2\right).
\end{align*}
This shows that
$$M=\max\bigg\{\sup\frac{\|\mathcal{P}[g_0+g_2]\|_2^2}{\|g_0\|_2^2+\|g_2\|_2^2}, \ \ \frac{8}{j_2^2},\ \  \frac{8}{j_1^2},\ \  \frac{4}{j_0^2},\ \ \frac{1}{3}\bigg\}\in\left(\frac{4}{j_0^2}, \frac{3}{2}+\frac{2}{j_1^2}\right).$$

\begin{description}
  \item[Step 4] {\bf Determination of }
  $$\sup\frac{\|\mathcal{P}[g_0+g_2]\|_2^2}{\|g_0\|_2^2+\|g_2\|_2^2}.$$
\end{description}
Let
$$\sigma(r)=\frac{1}{r}\int_0^r\rho|f_0(\rho)|\mathrm{d}\rho\ \ \ \mbox{and}\ \ \ \upsilon(r)=r\int_r^1\rho^{-1}|f_2(\rho)|\mathrm{d}\rho.$$
Then it follows from (\ref{july-4-eq1}) that
$$\|\mathcal{P}[g_0+g_2]\|_2^2\leq8\int_0^1r(\sigma(r)^2+\upsilon(r)^2)\mathrm{d}r+2\sigma(1)^2+16\sigma(1)\int_0^1r^2\upsilon(r)\mathrm{d}r,$$
where the equality holds if $f_0(\rho)$ and $f_2(\rho)$ both are positive real functions.

The definition of $\sigma$ and $\upsilon$ now gives the following formulas:
$$|f_0(r)|=\frac{\sigma(r)+r\sigma'(r)}{r}\ \ \ \mbox{and}\ \ \ |f_2(r)|=\frac{\upsilon(r)-r\upsilon'(r)}{r}.$$
Thus
$$\|g_0\|_2^2=2\int_0^1r|f_0(r)|^2\mathrm{d}r=2\int_0^1\frac{1}{r}\big(\sigma(r)+r\sigma'(r)\big)^2\mathrm{d}r$$
and
$$\|g_2\|_2^2=2\int_0^1r|f_2(r)|^2\mathrm{d}r=2\int_0^1\frac{1}{r}\big(\upsilon(r)-r\upsilon'(r)\big)^2\mathrm{d}r.$$
In what follows, we should find the best constant $M$ such that
\begin{align*}
\|\mathcal{P}[g_0+g_2]\|_2^2&\leq8\int_0^1r\big(\sigma(r)^2+\upsilon(r)^2\big)\mathrm{d}r+2\sigma(1)^2+16\sigma(1)\int_0^1r^2\upsilon(r)\mathrm{d}r \\
  & \leq 2M\int_0^1\frac{1}{r}\bigg(\big(\sigma(r)+r\sigma'(r)\big)^2+\big(\upsilon(r)-r\upsilon'(r)\big)^2\bigg)\mathrm{d}r,
\end{align*}
and prove that the above equalities can be attained.

For simplicity, let
$$X=8\int_0^1r\big(\sigma(r)^2+\upsilon(r)^2\big)\mathrm{d}r+2\sigma(1)^2+16\sigma(1)\int_0^1r^2\upsilon(r)\mathrm{d}r$$
and
$$Y=2\int_0^1\frac{1}{r}\bigg(\big(\sigma(r)+r\sigma'(r)\big)^2+\big(\upsilon(r)-r\upsilon'(r)\big)^2\bigg)\mathrm{d}r.$$
We now use the Lagrange multiplier method to maximize the following function
$$Z=\frac{X}{Y}.$$

According to the definition of $\sigma$ and $\upsilon$, we have the following constraints:
$$\sigma(0)=\upsilon(0)=\upsilon(1)=0$$
and (due to the definition of $f_2$)
$$\lim_{r\to0}\frac{\upsilon(r)-r\upsilon'(r)}{r}=\lim_{r\to0}\left(\frac{\upsilon(r)-r\upsilon'(r)}{r}\right)'=0.$$
Assume that $\sigma(1)=x$. Then $\sigma(1)^2=2\int_0^1\sigma(r)\sigma'(r)dr$. We then rewrite $Y$ as follows:
$$Y=2\int_0^1  \bigg(\frac{\sigma(r)^2+\upsilon(r)^2}{r}+r(\sigma'(r)^2+\upsilon'(r)^2)-2\upsilon(r)\upsilon'(r)+x^2\bigg) dr.$$
Now, the corresponding Lagrange function is
\begin{align*}
  L & =8r(\sigma(r)^2 +\upsilon(r)^2)+2x^2 +16 x r^2 \upsilon(r) \\
  & \ \ \ -2\lambda \bigg(\frac{\sigma(r)^2+\upsilon(r)^2}{r}+r(\sigma'(r)^2+\upsilon'(r)^2)-2\upsilon(r)\upsilon'(r)+x^2\bigg).
\end{align*}
The stationary points of $L$ are solutions to the following system:
$\frac{\partial}{\partial_r}\left(\frac{d L}{d\sigma'}\right)=\frac{d L}{d\sigma}$ and $\frac{\partial}{\partial_r}\left(\frac{d L}{d\upsilon'}\right)=\frac{d L}{d\upsilon}$,
i.e.,
$$\left\{
\begin{array}
{r@{\ }l}
&-\frac{\lambda \sigma}{r}+4 r \sigma+\lambda \sigma'+\lambda r \sigma''=0\\
\\
&4x r^2-\frac{\lambda \upsilon}{r}+4r\upsilon+\lambda\upsilon'+\lambda r \upsilon''=0.
\end{array}\right.$$

The solutions to the first equation are
$$\sigma(r)=c_1J_1\left(\frac{2r}{\sqrt{\lambda}}\right)+c_2\mathcal{N}_1\left(\frac{2r}{\sqrt{\lambda}}\right),$$
where $J_1$ is the Bessel function and $\mathcal{N}_1$ is the Hankel's function.
Since $\sigma(0)=0$, we then have $c_2=0$. Thus $\sigma(r)=c_1J_1(2r/\sqrt{\lambda})$.

The second equation has the following solutions
$$\upsilon(r)=-xr+c_3J_1\left(\frac{2r}{\sqrt{\lambda}}\right)+c_4\mathcal{N}_1\left(\frac{2r}{\sqrt{\lambda}}\right).$$
Due to the constraints $\upsilon(0)=\upsilon(1)=0$ and the assumption $\sigma(1)=x$, we obtain
\be\label{extremal-func}\sigma(r)=x\frac{J_1\left(\frac{2r}{\sqrt{\lambda}}\right)}{J_1\left(\frac{2}{\sqrt{\lambda}}\right)}\ \ \ \mbox{and}\ \ \ \upsilon(r)=-xr+\frac{xJ_1\left(\frac{2r}{\sqrt{\lambda}}\right)}{J_1\left(\frac{2}{\sqrt{\lambda}}\right)}.\ee

By inserting the above functions $\sigma$ and $\upsilon$ into the functions $X$ and $Y$, we get
$$X=8x^2\left(1+\frac{J_0\left(\frac{2}{\sqrt{\lambda}}\right)^2}{J_1\left(\frac{2}{\sqrt{\lambda}}\right)^2}-\frac{\sqrt{\lambda}J_0\left(\frac{2}{\sqrt{\lambda}}\right)}{J_1\left(\frac{2}{\sqrt{\lambda}}\right)}\right)$$
and
$$Y= -4 x^2+\frac{8 x^2}{\lambda}+\frac{8 x^2 J_0\left(\frac{2}{\sqrt{\lambda}}\right)^2}{\lambda J_1\left(\frac{2}{\sqrt{\lambda}}\right)^2}.$$
Then
$$Z=\frac{X}{Y}=\frac{2 \lambda^2 J_0\left(\frac{2}{\sqrt{\lambda}}\right)^2-2 \lambda^{5/2} J_0\left(\frac{2}{\sqrt{\lambda}}\right) J_1\left(\frac{2}{\sqrt{\lambda}}\right)+2 \lambda^2 J_1\left(\frac{2}{\sqrt{\lambda}}\right)^2}{2 \lambda J_0\left(\frac{2}{\sqrt{\lambda}}\right)^2+(2-\lambda) \lambda J_1\left(\frac{2}{\sqrt{\lambda}}\right)^2}.$$
According to Lemma \ref{step-4-lem6-7-9}, to find the maximum value of $Z$, we only need to solve the equation $Z=\lambda$, or equivalently,
\be\label{july-7-eq1}\lambda^{3/2} J_1\left(\frac{2}{\sqrt{\lambda}}\right) \left(-2 J_0\left(\frac{2}{\sqrt{\lambda}}\right)+\sqrt{\lambda} J_1\left(\frac{2}{\sqrt{\lambda}}\right)\right)=0.\ee

\begin{description}
  \item[Claim 1.1] The equation (\ref{july-7-eq1}) has only one solution at the interval $\left(4/j_0^2,\  3/2+2/j_1^2\right)$, and the unique solution is $\lambda_0\approx 1.180.$
\end{description}
This can be proved as follows:
Let $\delta=2/\sqrt{\lambda}$, we then reformulate (\ref{july-7-eq1}) as follows
$$H(\delta) :=J_1(\delta)-\delta J_0(\delta)=0,$$
where $\delta\in(2/\sqrt{3/2+2/j_1^2},\ j_0)$.
Since $H(2/\sqrt{3/2+2/j_1^2})<0$, $H(j_0)>0$, and $H'(\delta) =\left(\delta^2-1\right) J_1(\delta)/\delta>0$,  we see that $H(\delta)=0$ has only one solution.
Solving the equation $H(\delta)=0$, we have $\delta\approx1.841$, which shows that Claim 1.1 holds true.

Based on the above observations of Step 2, Step 3, and  Step 4, we have found the best constant $M=\lambda_0$ such that
$\|\mathcal{P}[\varphi]\|_2^2\leq M\|\varphi\|_2^2$, for any $\varphi\in L^2(\ID, \mathrm{d}A)$.
The equality holds, if we choose $\varphi_0(z)=f_0(r)+f_2(r)e^{2it}$,
where
$$f_0(r)=\frac{2J_0\left(\frac{2r}{\sqrt{\lambda_0}}\right)}{\sqrt{\lambda_0}J_1\left(\frac{2}{\sqrt{\lambda_0}}\right)} \ \ \ \mbox{and}\ \ \ f_2(r)=\frac{2J_2\left(\frac{2r}{\sqrt{\lambda_0}}\right)}{\sqrt{\lambda_0}J_1\left(\frac{2}{\sqrt{\lambda_0}}\right)}.$$

Hence
$$\|\mathcal{P}\|_2=\alpha=\left(\sup\frac{\|\mathcal{P}[g_0+g_2]\|_2^2}{\|g_0\|_2^2+\|g_2\|_2^2}\right)^{1/2}=\sqrt{\lambda_0},$$
which completes the proof.
\qed

\subsection*{Proof of Theorem \ref{infinity-norm}}
Suppose $\varphi\in L^{p}(\ID, \mathrm{d}A)$, where $p>2$, and let $z=re^{it}\in\ID$.
It follows from H\"older's inequality for integral that
\begin{align*}
  |\mathcal{P}[\varphi](z)|& =\left|\int_{\ID}\left(\frac{\varphi(w)}{z-w}+\frac{z}{1-\bar{w}z}\overline{\varphi(w)}\right)\mathrm{d}A(w)\right| \\
   & \leq\left[ \left(\int_{\mathbb{D}}   \frac{\mathrm{d} A(w)}{|z-w|^q} \right)^{\frac{1}{q}}+ \left(\int_{\mathbb{D}} \frac{|z|^q}{|1-\bar{w}z|^q} \mathrm{d} A(w)\right)^{\frac{1}{q}}\right]\|\varphi\|_{p}\\
   &\leq2^{1-\frac{1}{q}} \left[\int_{\mathbb{D}}   \left(\frac{1}{|z-w|^q}+\frac{|z|^q}{|1-\bar{w}z|^q}\right) \mathrm{d} A(w)\right]^{\frac{1}{q}} \|\varphi\|_{p},
\end{align*}
where $q=p/(p-1)\in[1, 2)$.
Therefore,
$$\|\mathcal{P}\|_{L^p\to L^\infty}\leq2^{1-\frac{1}{q}}\sup_{z \in \mathbb{D}} \left[\int_{\mathbb{D}}   \left(\frac{1}{|z-w|^q}+\frac{|z|^q}{|1-\bar{w}z|^q}\right) \mathrm{d} A(w)\right]^{\frac{1}{q}}.$$
In what follows, we should find such a supremum and prove that equality holds.

We start with the first integral. By using $w=\frac{z-\tau}{1-\bar{z}\tau}$, we then rewrite it as follows:
\begin{align*}
\int_{\mathbb{D}}   \frac{1}{|z-w|^q} \mathrm{d} A(w)& =\int_{\mathbb{D}} \frac{1}{|z-\frac{z-\tau}{1-\bar{z}\tau}|^q}\frac{(1-|z|^2)^2}{|1-z\bar{\tau}|^4}  \mathrm{d} A(w)  \\
   & =(1-|z|^2)^{2-q}   \int_{\mathbb{D}}   \frac{1}{|\tau|^q|1-z\bar{\tau}|^{4-q}}\mathrm{d}A(w).
\end{align*}
The following formula can be proved by using Taylor expansion and Parseval's formula:
\be\label{parseval}\frac{1}{2\pi}\int_0^{2\pi}\frac{\mathrm{d}\theta}{|1-ze^{i\theta}|^{2\beta}}=\sum_{n=0}^\infty\left(\frac{\Gamma(n+\beta)}{n!\Gamma(\beta)}\right)^2|z|^{2n},\ee
where $\beta>0$, $z\in\mathbb{D}$, and $\Gamma$ is the Gamma function.
Since $\tau\in\ID$, by using hypergeometric functions given by (\ref{bz-defn-2.1}) together with the above equation (\ref{parseval}), we have
\begin{align*}
   \int_{\mathbb{D}}   \frac{1}{|\tau|^q|1-z\bar{\tau}|^{4-q}}\mathrm{d}A(w) &=2\sum_{n=0}^{\infty}\left(\frac{\Gamma(n+2-q/2)}{n!\Gamma(2-q/2)}\right)^2\frac{|z|^{2n}}{2n+2-q}\\
   &=\frac{2}{2-q}\ _2 F_1\left(1-\frac{q}{2},2-\frac{q}{2};1;|z|^2\right).
\end{align*}
Now, applying Euler's formula, the first integral can be given as follows:
$$\int_{\mathbb{D}}   \frac{1}{|z-w|^q} \mathrm{d} A(w)=\frac{2}{2-q}\ _2 F_1\left(\frac{q}{2},\frac{q}{2}-1;1;|z|^2\right).$$
On the other hand, by using (\ref{parseval}) again, the second integral can be given as follows:
$$\int_{\mathbb{D}}\frac{|z|^q}{|1-\bar{w}z|^q} d A(w)=\ _2{F}_1  \left(\frac{q}{2},\frac{q}{2};2;|z|^2\right)|z|^q.$$	

\begin{description}
  \item[Claim 2.1] The function
  $$\Phi(t)=\frac{2}{2-q}\ _2 F_1\left(\frac{q}{2},\frac{q}{2}-1;1;t\right)+\ _2{F}_1  \left(\frac{q}{2},\frac{q}{2};2;t\right)t^{\frac{q}{2}}$$
  is strictly increasing on $[0, 1]$.
\end{description}
This can be proved as follows: Elementary calculations show that
$$ \frac{2}{q}\Phi'(t)= \frac{q}{4}t^{\frac{q}{2}}\ _2 F_1\left(\frac{q}{2}+1,\frac{q}{2}+1;3;t\right)+t^{\frac{q}{2}-1}\ _2 F_1\left(\frac{q}{2},\frac{q}{2};2;t\right )-\ _2 F_1  \left(\frac{q}{2}+1,\frac{q}{2};2;t\right).$$
By using the definition of hypergeometric function, we then arrive
\begin{align*}
  \frac{2}{q}\Phi'(t) &=t^{\frac{q}{2}-1}+t^{\frac{q}{2}-1} \sum_{k=1}^{+\infty}\bigg(\frac{2}{q}\frac{k(\frac{q}{2})_k(\frac{q}{2})_k}{(2)_k k!}+\frac{(\frac{q}{2})_{k}(\frac{q}{2})_{k}}{(2)_{k} k!}\ \bigg) t^{k}- \sum_{k=0}^{+\infty}\frac{(\frac{q}{2}+1)_k(\frac{q}{2})_k}{(2)_k k!}t^{k}\\
  &=t^{\frac{q}{2}-1}\sum_{k=0}^{+\infty}\bigg(1+\frac{2k}{q}\bigg)\frac{(\frac{q}{2})_k(\frac{q}{2})_k}{(2)_k k!} t^{k}- \sum_{k=0}^{+\infty}\frac{(\frac{q}{2}+1)_k(\frac{q}{2})_k}{(2)_k k!}t^{k}.
\end{align*}
The assumption $p>2$ now implies that $q/2-1<0$, and thus $t^{q/2-1}\geq1$, since $t=|z|^2\in [0, 1]$. Then
$$\frac{2}{q}\Phi'(t) \geq\sum_{k=0}^{+\infty}\bigg(1+\frac{2k}{q}\bigg)\frac{(\frac{q}{2})_k(\frac{q}{2})_k}{(2)_k k!} t^{k}- \sum_{k=0}^{+\infty}\frac{(\frac{q}{2}+1)_k(\frac{q}{2})_k}{(2)_k k!}t^{k}=0,$$
where the last equality holds because for all non-negative integers $k$ we have:
$$\bigg(1+\frac{2k}{q}\bigg)\frac{(\frac{q}{2})_k(\frac{q}{2})_k}{(2)_k k!}=\frac{(\frac{q}{2}+1)_k(\frac{q}{2})_k}{(2)_k k!}.$$
This shows that Claim 2.1 is true.

By appealing to Gauss' theorem, we have
$$\Phi(1)=\frac{2\Gamma(2-q)}{\Gamma^2(2-\frac{q}{2})}.$$
Therefore,
$$\int_{\mathbb{D}}   \left(\frac{1}{|z-w|^q}+\frac{|z|^q}{|1-\bar{w}z|^q}\right) \mathrm{d} A(w)\leq\max_{r\in[0, 1]}\Phi(r)=\frac{2\Gamma(2-q)}{\Gamma^2(2-\frac{q}{2})},$$
which implies that
$$\|\mathcal{P}\|_{L^p\to L^{\infty}}\leq2\left(\frac{\Gamma(2-q)}{\Gamma^2(2-\frac{q}{2})}\right)^{\frac{1}{q}}.$$

To show the above equality holds, let $\varphi_0(w)=\frac{i(1-w)}{|1-w|^{1+\frac{q}{p}}}$.
Then
$$\|\varphi_0\|_p=\left(\int_{\ID}\frac{1}{|1-w|^q}\mathrm{d}A(w)\right)^{\frac{1}{p}}=\left(\frac{\Gamma(2-q)}{\Gamma^2(2-\frac{q}{2})}\right)^{\frac{1}{p}}.$$
Note that for $z=1$, the mean-inequality becomes equality, we then have
$$\max_{z\in\ID}\frac{|\mathcal{P}[\varphi_0](z)|}{\|\varphi_0\|_p}=\frac{|\mathcal{P}[\varphi_0](1)|}{\|\varphi_0\|_p}=2\left(\frac{\Gamma(2-q)}{\Gamma^2(2-\frac{q}{2})}\right)^{\frac{1}{q}}.$$
This shows that
$$\|\mathcal{P}\|_{L^p\to L^{\infty}}\geq2\left(\frac{\Gamma(2-q)}{\Gamma^2(2-\frac{q}{2})}\right)^{\frac{1}{q}}.$$
Therefore, we conclude that
$$\|\mathcal{P}\|_{L^p\to L^{\infty}}=2{\bigg(\frac{\Gamma(2-q)}{\Gamma^2(2-\frac{q}{2})}\bigg)}^{\frac{1}{q}}.$$

If in particular $p=\infty$, i.e., $q=1$, then following the above proof, we have
$$\|\mathcal{P}\|_{\infty}\leq\frac{8}{\pi}.$$
The extremal function $\varphi_0(w)=\frac{i(1-w)}{|1-w|}$,  and limiting proces when $z \rightarrow 1$, show that the above equality can be attained, which completes the proof.
\qed

\begin{rem}
If $1\leq p\leq2$, then $\mathcal{P}$ will not send $L^p(\ID, \mathrm{d}A)$ to $L^{\infty}(\ID, \mathrm{d}A)$.
We only need to consider the case of $p=2$, because $L^2(\ID, \mathrm{d}A)\subseteq L^p(\ID, \mathrm{d}A)$.

Let
$$\varphi_0(w)=\frac{w}{|w|^2\log\frac{2}{|w|}}.$$
Then
$$\|\varphi_0\|_2^2=2\int_0^1\frac{1}{r\log^2\frac{2}{r}}\mathrm{d}r=\frac{2}{\log2},$$
which shows that $\varphi_0\in L^2(\ID, \mathrm{d}A)$. However,
$$|\mathcal{P}[\varphi_0](0)|=2\int_0^1\frac{1}{r\log\frac{2}{r}}\mathrm{d}r=\infty,$$
which shows that $\mathcal{P}[\varphi_0]\notin L^{\infty}(\ID, \mathrm{d}A)$.

\end{rem}
\section{Some calculations related to $\|\mathcal{P}\|_1$}\label{L1norm}
Suppose $\varphi\in L^1(\ID, \mathrm{d}A)$ and let $\beta=\arg\varphi(w)$. Then
\begin{align*}
  \|\mathcal{P}[\varphi]\|_1 & =\int_{\ID}\left|\int_{\ID}\frac{\varphi(w)}{w-z}+\frac{z\overline{\varphi(w)}}{1-\bar{w}z}\mathrm{d}A(w)\right|\mathrm{d}A(z) \\
  &\leq\int_{\ID}\left(\int_{\ID}\left|\frac{e^{i\beta}}{w-z}+\frac{ze^{-i\beta}}{1-\bar{w}z}\right||\varphi(w)|\mathrm{d}A(w)\right)\mathrm{d}A(z)\\
  &\leq\int_{\ID}|\varphi(w)|\mathrm{d}A(w)\sup_{w\in\ID}\int_{\ID}\left|\frac{1}{we^{-i\beta}-ze^{-i\beta}}+\frac{ze^{-i\beta}}{1-\overline{we^{-i\beta}}ze^{-i\beta}}\right|\mathrm{d}A(z).
\end{align*}
By letting $\eta=ze^{-i\beta}$ and $\omega=we^{-i\beta}\in\ID$, we have
$$ \|\mathcal{P}[\varphi]\|_1\leq\sup_{\omega\in\ID}\int_{\ID}\left|\frac{1}{\omega-\eta}+\frac{\eta}{1-\bar{\omega}\eta}\right|\mathrm{d}A(\eta)\|\varphi\|_1.$$
Therefore,
$$\|\mathcal{P}\|_1 \leq \sup_{w\in\ID}\int_{\ID}\left|\frac{1}{w-z}+\frac{z}{1-\bar{w}z}\right|\mathrm{d}A(z).$$
Fix $w_0\in \ID$ and let $\varphi_{w_0}(z)=\frac{w_0-z}{1-\bar{w}_0z}$ be the M\"obius transform.
Suppose $\chi_{\Omega}$ is the characteristic function of $\Omega$, i.e.,
$$\chi_{\Omega}(w)=\left\{
\begin{array}
{r@{\ }l}
1, \ \ \ \ \ \   & \mbox{if}\ \ \ w\in\Omega\\
\\
0, \ \ \ \ \ \ &  \mbox{if}\ \ \  w\in\mathbb{C}\backslash\Omega.
\end{array}\right.$$
Consider the following function:
$$\varphi_n(z)=n^2\chi_{\frac{1}{n}\ID}(\varphi_{w_0}(z)).$$ Then
$$\|\varphi_n\|_1=\int_{\ID}\frac{(1-|w_0|^2)^2}{|1-\bar{w}_0\frac{1}{n}\xi|^4}\mathrm{d}A(\xi)\to(1-|w_0|^2)^2\ \ \ \mbox{as}\ \ \ n\to \infty.$$
On the other hand, we have
$$\frac{\|\mathcal{P}[\varphi_n]\|_1}{\|\varphi_n\|_1}\to\int_{\ID}\left|\frac{1}{w_0-z}+\frac{z}{1-\bar{w}_0z}\right|\mathrm{d}A(z)\ \ \ \mbox{as}\ \ \ n\to \infty.$$
This shows that
\begin{align*}
  \|\mathcal{P}\|_1 & = \sup_{w\in\ID}\int_{\ID}\left|\frac{1}{w-z}+\frac{z}{1-\bar{w}z}\right|\mathrm{d}A(z) \\
  & =\sup_{w\in\ID}(1-|w|^2)\int_{\ID}\frac{|w-\bar{w}-\zeta+\frac{1}{\zeta}|}{|1-\bar{w}\zeta|^4}\mathrm{d}A(\zeta),
\end{align*}
where the last equality holds, after change of variables $\zeta=\frac{w-z}{1-\bar{w}z}$.

\begin{rem}
It seems to the authors that the above supremum can be attained at the point $w=0$, i.e.,
$$\|\mathcal{P}\|_1=\int_{\ID}\left|z-\frac{1}{z}\right|\mathrm{d}A(z).$$
Unfortunately, we fail to prove this.
We only give some calculations connected to the conjectured value of $\|\mathcal{P}\|_1$ as follows:
Suppose $z=re^{it}\in\ID$. Then
\begin{align*}
  \int_{\ID}\left|z-\frac{1}{z}\right|\mathrm{d}A(z) &=\frac{1}{\pi}\int_0^1\mathrm{d}r\int_0^{2\pi}\sqrt{1+r^4-2r^2\cos(2t)}\mathrm{d}t\\
   & =\frac{1}{\pi}\int_0^1(1-r^2)\mathrm{d}r\int_0^{2\pi}\sqrt{1+\left(\frac{2r}{1-r^2}\right)^2\sin^2(t)}\mathrm{d}t.
\end{align*}
Recall that elliptic integral of the second kind is defined as follows:
\begin{align*}
  E(k^2) & =\int_0^{\frac{\pi}{2}}\sqrt{1-k^2\sin^2(t)}\mathrm{d}t \\
   & =\frac{\pi}{2}\left(1-\sum_{n\geq1}\left(\frac{(2n-1)!!}{(2n)!!}\right)^2\frac{k^{2n}}{2n-1}\right),
\end{align*}
where $|k|\leq1$.
Then
$$\int_0^{2\pi}\sqrt{1-k^2\sin^2(t)}\mathrm{d}t=2E(k^2)+2\sqrt{1-k^2}E\left(\frac{-k^2}{1-k^2}\right).$$
Therefore,
\begin{align*}
  \int_{\ID}\left|z-\frac{1}{z}\right|\mathrm{d}A(z) &=\frac{2}{\pi}\int_0^1\left\{(1-r^2)E\left(\frac{-4r^2}{(1-r^2)^2}\right)+(1+r^2)E\left(\frac{4r^2}{(1+r^2)^2}\right)\right\}\mathrm{d}r\\
   &\approx2.10441 (\mbox{by using the mathematica software}).
\end{align*}
\end{rem}
\section{Proof of Theorem \ref{thm-hilbert-1}}\label{sec-4}
We first prove two lemmas as follows:

\begin{lem}\label{lem-Feb-5-20}
Let $p(w)=a_{m, n}\bar{w}^nw^{m}\chi_{\ID}(w)$, where $m$, $n$ are non-negative integers and $a_{m, n}$ are complex numbers. Then
for $z\in\ID$, we have
\be\label{524-2}\mathcal{H}[p](z)=\left\{
\begin{array}
{r@{\ }l}
\frac{m}{n+1}a_{m, n}z^{m-1}\bar{z}^{n+1}-\frac{m-n-1}{n+1}a_{m, n}z^{m-n-2},  \ \ \ \ \ \    & \emph{if}\ \ \ m-n>1\\
\\
\frac{m}{n+1}a_{m, n}z^{m-1}\bar{z}^{n+1}-\frac{1-m+n}{n+1}\bar{a}_{m, n}z^{-m+n}, \ \ \ \ \ \ &  \emph{if}\ \ \ m-n\leq 1.
\end{array}\right.\ee
\end{lem}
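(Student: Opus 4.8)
The plan is to avoid the principal-value integral in (\ref{operator-h}) altogether and instead differentiate the closed form for $\mathcal{P}[p]$ already recorded in Lemma~\ref{531-lem1}. Since $\mathcal{H}$ is \emph{defined} as the $z$-derivative of $\mathcal{P}$, and since for a single monomial $p(w)=a_{m,n}w^m\bar w^n\chi_{\ID}(w)$ the function $\mathcal{P}[p]$ turns out to be a genuine polynomial in $z$ and $\bar z$ (and hence smooth on $\ID$), the identity $\mathcal{H}[p]=(\mathcal{P}[p])_z$ can be applied directly by taking the Wirtinger derivative $\partial_z$ of the explicit expression in (\ref{531-lem1-1}), treating $\bar z$ as constant. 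No interchange of differentiation and (principal-value) integration is needed.

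First I would rewrite the two branches of (\ref{531-lem1-1}) via $|z|^{2n+2}=z^{n+1}\bar z^{n+1}$ so as to expose the polynomial structure, absorbing the power of $z$ into the power coming from $|z|^{2n+2}$. In the case $m-n\ge 1$ this gives
\[
\mathcal{P}[p](z)=-\frac{a_{m,n}}{n+1}\bigl(z^{m-n-1}-z^{m}\bar z^{n+1}\bigr),
\]
while in the case $m-n<1$ (so that $1-m+n\ge 1>0$, hence all exponents are non-negative) it gives
\[
\mathcal{P}[p](z)=\frac{a_{m,n}}{n+1}\,z^{m}\bar z^{n+1}-\frac{\bar a_{m,n}}{n+1}\,z^{1-m+n},
\]
each right-hand side being a finite sum of monomials $z^{a}\bar z^{b}$.

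Next I would apply $\partial_z$ term by term, using $\partial_z\bigl(z^{a}\bar z^{b}\bigr)=a\,z^{a-1}\bar z^{b}$ and in particular $\partial_z\bar z^{\,n+1}=0$. In the first case the two monomials contribute $(m-n-1)z^{m-n-2}$ and $m\,z^{m-1}\bar z^{n+1}$, which reproduce exactly the top branch of (\ref{524-2}); in the second case they contribute $m\,z^{m-1}\bar z^{n+1}$ and $(1-m+n)z^{-m+n}$, which reproduce the bottom branch. This is the whole computation.

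The one point requiring care is that the case boundaries do not coincide: Lemma~\ref{531-lem1} splits at $m-n\ge1$ versus $m-n<1$, whereas (\ref{524-2}) splits at $m-n>1$ versus $m-n\le1$. I would therefore check $m-n=1$ separately, where in the top branch $z^{m-n-1}=z^{0}\equiv1$ is constant (so its derivative is genuinely $0$, and the factor $m-n-1$ vanishes), while in the bottom branch the factor $1-m+n$ vanishes; in both cases the formula collapses to $\tfrac{m}{n+1}a_{m,n}z^{m-1}\bar z^{n+1}$, so the two splittings are consistent and (\ref{524-2}) holds across the boundary. There is no genuine analytic obstacle here; the only substantive content is the bookkeeping of combining powers of $z$ with the power supplied by $|z|^{2n+2}$ before differentiating.
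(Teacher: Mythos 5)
Your proof is correct, but it takes a genuinely different route from the paper's. The paper works from the right-hand side of (\ref{operator-h}), computing the two pieces of $\mathcal{H}[p]=\mathcal{S}[p]-\mathfrak{B}[\bar p]$ separately: the singular integral $\mathcal{S}[p]$ is evaluated as a principal value via Green's formula on $\ID$ with a small disk about $z$ excised, followed by two residue computations, and the Bergman projection $\mathfrak{B}[\bar p]$ is computed by expanding the kernel. You instead differentiate the closed form of Lemma \ref{531-lem1}; this is legitimate because after writing $|z|^{2n+2}=z^{n+1}\bar z^{n+1}$ both branches of that formula are polynomials in $z$ and $\bar z$ with non-negative exponents (in the second branch $1-m+n\geq 1$), so the Wirtinger derivative can be taken termwise, and the paper's primary definition of $\mathcal{H}$ is precisely $(\mathcal{P}[\cdot])_z$. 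Your algebra checks out, including the verification at $m-n=1$ where the two case-splittings overlap and both branches of (\ref{524-2}) collapse to $\tfrac{m}{n+1}a_{m,n}z^{m-1}\bar z^{n+1}$. What your route buys is brevity and complete avoidance of the principal-value integral; what it loses is the standalone formula (\ref{beurling-ope}) for $\mathcal{S}[p]$, which the paper needs elsewhere (it is cited in the introduction to show that $\mathcal{S}$ is not an isometry of $L^2(\ID,\mathrm{d}A)$ and to obtain the corollary on $\|\mathcal{S}\|_{2}$). The only caveat: if one took the principal-value expression in (\ref{operator-h}) as the defining formula for $\mathcal{H}$, your argument would additionally need to justify that $(\mathcal{P}[p])_z$ equals that singular integral; since the paper defines $\mathcal{H}$ as the $z$-derivative and merely records the integral representation alongside it, this is not a gap in context.
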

\bpf
Fix $z\in\ID$, according to {\it Green's formula} in the form $$-\int_{P(a,r,R)}\frac{\partial f(w)}{\partial \bar w}dw\wedge d\bar{ w}=\int_{\partial P(a,r,R)}f(w)dw,$$ where $P(a,r,R)=\{w\in\mathbb{C}: |w|<R, |w-a|>r\}$, $|a|+r<R$, and $dw\wedge d\bar{w}=-2\pi i\mathrm{d}A(w)$, we have
\begin{align*}
  \mathcal{S}[p](z) & =-a_{m, n}\lim_{\varepsilon\to0}\int_{\ID\setminus\ID(z, \varepsilon)}\frac{\bar{w}^nw^{m}}{(w-z)^2}\mathrm{d}A(w) \\
  & =\frac{-a_{m, n}}{2\pi i}\lim_{\varepsilon\to0}\left(\int_{\mathbb{T}}\frac{\bar{\zeta}^{n+1}\zeta^{m}\mathrm{d}\zeta}{(n+1)(\zeta-z)^2}-\int_{\partial\ID(z, \varepsilon)}\frac{\bar{\zeta}^{n+1}\zeta^{m}\mathrm{d}\zeta}{(n+1)(\zeta-z)^2}\right).
\end{align*}
Applying Cauchy residue theorem to the first integral, we have
$$\frac{-a_{m, n}}{2\pi i}\int_{\mathbb{T}}\frac{\bar{\zeta}^{n+1}\zeta^{m}\mathrm{d}\zeta}{(n+1)(\zeta-z)^2}=\left\{
\begin{array}
{r@{\ }l}
-\frac{m-n-1}{n+1}a_{m, n}z^{m-n-2}, \ \    &\mbox{if}\ \ \   m-n>1\\
\\
0, \ \  & \mbox{if}\ \ \ m-n\leq 1.
\end{array}\right.$$
By taking $\zeta=z+\varepsilon\xi$, and again, applying Cauchy residue theorem to the second integral, we get
\begin{align*}
  \frac{a_{m, n}}{2\pi i}\lim_{\varepsilon\to0}\int_{\partial\ID(z, \varepsilon)}\frac{\bar{\zeta}^{n+1}\zeta^{m}\mathrm{d}\zeta}{(n+1)(\zeta-z)^2} &=\frac{a_{m, n}}{2\pi i(n+1)}\lim_{\varepsilon\to0}\int_{\mathbb{T}}\frac{(\bar{z}+\varepsilon\bar{\xi})^{n+1}(z+\varepsilon\xi)^{m}}{\varepsilon\xi^{2}}\mathrm{d}\xi \\
   & =\frac{m}{n+1}a_{m, n}z^{m-1}\bar{z}^{n+1}.
\end{align*}
Therefore,
\be\label{beurling-ope}\mathcal{S}[p](z)=\left\{
\begin{array}
{r@{\ }l}
\frac{m}{n+1}a_{m, n}z^{m-1}\bar{z}^{n+1}-\frac{m-n-1}{n+1}a_{m, n}z^{m-n-2}, \ \ \ \ \ \ \    & \mbox{if}\ \ \ m-n>1\\
\\
\frac{m}{n+1}a_{m, n}z^{m-1}\bar{z}^{n+1}, \ \ \ \ \ \ & \mbox{if}\ \ \ m-n\leq 1.
\end{array}\right.\ee

On the other hand, direct calculations show that
\begin{align*}
  \mathfrak{B}[\bar{p}](z)&=\bar{a}_{m, n}\sum_{k=0}^{\infty}(k+1)z^k\int_{\ID}w^{n}\bar{w}^{m+k}\mathrm{d}A(w)\\
   &=\frac{1-m+n}{n+1}\bar{a}_{m, n}z^{-m+n},
\end{align*}
where $m-n\leq0$. Therefore,
$$\mathfrak{B}[\bar{p}](z)=\left\{
\begin{array}
{r@{\ }l}
0, \ \ \ \ \ \ \    & \mbox{if}\ \ \ m-n>1\\
\\
\frac{1-m+n}{n+1}\bar{a}_{m, n}z^{-m+n}, \ \ \ \ \ \ &  \mbox{if}\ \ \ m-n\leq 1.
\end{array}\right.$$

Based on the above discussions, we see that (\ref{524-2}) holds true since
$$\mathcal{H}[p](z)=\mathcal{S}[p](z)-\mathfrak{B}[\bar{p}](z).$$
\epf

\begin{lem}\label{orth-lemma}
Let $g_d$ be given by $(\ref{6-28-gd})$.
Then for any two different integers $d_1$, $d_2$, the functions $\mathcal{H}[g_{d_1}](z)$ and $\mathcal{H}[g_{d_2}](z)$ are orthogonal in $\ID$, i.e.,
\be\label{orthogonal}\int_{\ID}\mathcal{H}[g_{d_1}](z)\overline{\mathcal{H}[g_{d_2}](z)}\mathrm{d}A(z)=0\ \ \ \ \mbox{for any}\ \ \   d_1\neq d_2.\ee
\end{lem}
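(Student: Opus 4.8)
The plan is to read off the angular structure of $\mathcal{H}[g_d]$ from Lemma~\ref{lem-Feb-5-20} and then reduce the whole statement to a single radial integral. Writing $z=re^{it}$, each monomial $w^m\bar w^n$ with $m-n=d$ contributes to $\mathcal{H}[g_d]$ two pieces. For $d\ge 2$ both pieces carry the single angular frequency $e^{i(d-2)t}$; for $d\le 1$ the ``diagonal'' piece $z^{m-1}\bar z^{n+1}$ carries $e^{i(d-2)t}$ while the ``projection'' piece $z^{\,n-m}$ carries $e^{-idt}$ (and when $d=1$ the coefficient of the latter vanishes). Thus $\mathcal{H}[g_d]$ is supported on the frequency set $\{d-2\}$ for $d\ge2$ and $\{d-2,-d\}$ for $d\le0$.

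Since $\int_0^{2\pi}e^{i\ell t}\,\mathrm{d}t=0$ for $\ell\neq0$, the inner product in $(\ref{orthogonal})$ can be nonzero only if $\mathcal{H}[g_{d_1}]$ and $\mathcal{H}[g_{d_2}]$ share a common angular frequency. First I would dispose of all ``generic'' pairs by a short case check on these frequency sets: when $d_1,d_2$ are both $\ge2$, both $\le0$, or when one of them equals $1$, the two sets are disjoint for $d_1\neq d_2$, so the angular integration already forces $(\ref{orthogonal})$. The only surviving possibility (using that orthogonality is symmetric in $d_1,d_2$, so we may take $d_1\ge2$, $d_2\le0$) is $d_1-2=-d_2$, i.e.\ $d_1+d_2=2$; set $k=d_1-2=-d_2\ge0$, so the shared frequency is $k$.

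The crux is this single case $d_1+d_2=2$. Here $\mathcal{H}[g_{d_1}]$ is \emph{entirely} of frequency $k$, say $A(r)e^{ikt}$ with
\[
A(r)=\sum_{n\ge0}a^{(1)}_{n+d_1,n}\Big(\tfrac{n+d_1}{n+1}\,r^{2n+d_1}-\tfrac{d_1-1}{n+1}\,r^{d_1-2}\Big),
\]
whereas in $\mathcal{H}[g_{d_2}]$ the frequency-$k$ contribution comes \emph{only} from the projection piece $z^{-d_2}=z^{k}$, whose radial part is $r^k$ independent of $n$; hence that contribution is $C\,r^{k}e^{ikt}$ with $C$ a constant in $r$. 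After integrating over the angle, the inner product collapses to
\[
\int_{\ID}\mathcal{H}[g_{d_1}]\,\overline{\mathcal{H}[g_{d_2}]}\,\mathrm{d}A=2\bar C\int_0^1 A(r)\,r^{d_1-1}\,\mathrm{d}r.
\]
I would then evaluate this radial integral term-by-term: each summand yields $\frac{n+d_1}{n+1}\cdot\frac{1}{2(n+d_1)}-\frac{d_1-1}{n+1}\cdot\frac{1}{2(d_1-1)}=\frac{1}{2(n+1)}-\frac{1}{2(n+1)}=0$, where $d_1\ge2$ guarantees $2d_1-3>-1$ and $d_1-1\neq0$ so that both integrals converge and are as written. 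Therefore the whole integral vanishes and $(\ref{orthogonal})$ holds in this last case too.

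The main obstacle is exactly this final cancellation. The angular argument fails precisely on the pairs with $d_1+d_2=2$, which for the operator $\mathcal{P}$ genuinely destroy orthogonality (Lemma~\ref{orth1-lemma}); what rescues the Beurling transform is the extra factor produced by the $z$-derivative, which is tuned so that the two radial integrals in $A(r)$ cancel identically against the pure monomial $r^{k}$ coming from $g_{d_2}$. Verifying this exact matching of the radial contributions — rather than any angular bookkeeping — is the heart of the proof.
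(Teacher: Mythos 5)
Your proposal is correct and follows essentially the same route as the paper: both arguments use the explicit formula for $\mathcal{H}$ on monomials (Lemma \ref{lem-Feb-5-20}), dispose of all pairs with $d_1+d_2\neq 2$ by angular orthogonality, and then verify the critical case $d_1+d_2=2$ by the exact radial cancellation $\tfrac{1}{2(n+1)}-\tfrac{1}{2(n+1)}=0$ (the paper records the same cancellation as a factor $(d_1+d_2-2)$ in the coefficient). Your organization by angular frequency supports is a slightly cleaner bookkeeping of the same computation, and your closing remark correctly identifies why $\mathcal{H}$ succeeds where $\mathcal{P}$ fails.
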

\begin{proof}
Fix $z=re^{it}\in\ID$. Sine $\mathcal{H}[g_d](z)=\sum_{n\geq\max\{0, -d\}}\mathcal{H}[b_nw^{n+d}\bar{w}^n]$,
it follows from Lemma \ref{lem-Feb-5-20} that
\be\label{hgd}\mathcal{H}[g_d](z)=\left\{
\begin{array}
{r@{\ }l}
\sum\limits_{n=0}^{\infty}b_n\left(\frac{n+d}{n+1}z^{n+d-1}\bar{z}^{n+1}-\frac{d-1}{n+1}z^{d-2}\right), \ \ \ \ \ \ \    & \mbox{if}\ \ \  d>1
\\
\sum\limits_{n=0}^{\infty}b_nz^{n}\bar{z}^{n+1}, \ \ \ \ \ \ &  \mbox{if}\ \ \  d=1
\\
\sum\limits_{n=-d}^{\infty}\left(\frac{n+d}{n+1}b_nz^{n+d-1}\bar{z}^{n+1}-\bar{b}_n\frac{1-d}{n+1}z^{-d}\right), \ \ \ \ \ \ &  \mbox{if}\ \ \  d< 1.
\end{array}\right.\ee
Now suppose $d_1$ and $d_2$ are two different integers.
Long but straightforward calculations together with the following fact
$$\int_0^{2\pi}e^{ikt}dt=0,\ \ \ \mbox{for any}\ \ \ k\neq0,$$
show that for the cases: $d_1$, $d_2>1$, or $d_1$, $d_2<1$, or $d_1=1$, $d_2\neq1$, the formula (\ref{orthogonal}) always holds true.
We only prove the following case.
\begin{description}
  \item[Case 1] $d_1>1$ and $d_2< 1$.
\end{description}
Elementary calculations show that
\begin{align*}
  \mathcal{H}&[g_{d_1}](z)\overline{\mathcal{H}[g_{d_2}](z)}=\\
   &\sum_{n\geq0,\, l\geq-d_2}\frac{(l+d_2)b_n\bar{b}_le^{i(d_1-d_2)t}}{(n+1)(l+1)}\left\{(n+d_1)r^{2n+2l+d_1+d_2}-(d_1-1)r^{2l+d_1+d_2-2}\right\} \\
   &+\sum_{n\geq0,\, l\geq-d_2} \frac{(1-d_2)b_n{b}_le^{i(d_1+d_2-2)t}}{(n+1)(l+1)}\{(d_1-1)r^{d_1-d_2-2}-(n+d_1)r^{2n+d_1-d_2}\}.
\end{align*}

If $d_1+d_2-2\neq0$, then it is easy to see that (\ref{orthogonal}) holds true.

If $d_1+d_2-2=0$, then
$$  \int_{\ID}\mathcal{H}[g_{d_1}](z)\overline{\mathcal{H}[g_{d_2}](z)}\mathrm{d}A(z) =2\sum_{n\geq0,\, l\geq-d_2}b_n{b}_l\frac{(1-d_2)(d_1+d_2-2)}{(l+1)(d_1-d_2)(2n+d_1-d_2+2)}=0,$$
which again shows that (\ref{orthogonal}) holds true.
Therefore, the proof is complete.
\end{proof}

\subsection*{Proof of Theorem \ref{thm-hilbert-1}}
To prove Theorem \ref{thm-hilbert-1}, following the proof of Theorem \ref{general-2norm}, it suffices to show that
\be\label{June-27-1}\|\mathcal{H}[\varphi]\|_{2}= \|\varphi\|_{2}\ee
for any functions $\varphi(w)=\sum_{d=-\infty}^{\infty}g_d(w)\in L^2(\ID, \mathrm{d}A)$,
where $g_d(w)$ is given by (\ref{6-28-gd}).

On the other hand, we already show by Lemma \ref{orth-lemma} that $\mathcal{H}[g_{d_1}]$ and $\mathcal{H}[g_{d_2}]$ are orthogonal functions in the Hilbert space $L^2(\ID, \mathrm{d}A)$. Thus, $\|\mathcal{H}[\varphi]\|_2^2=\sum_{d=-\infty}^{\infty}\|\mathcal{H}[g_d]\|_{2}^2$. It is easy to see that
$\|\varphi\|_2^2=\sum_{d=-\infty}^{\infty}\|g_d\|_{2}^2$.
Now, following the proof of the corresponding results in \cite[Theorem 5.2]{kalaj1} (see also \cite[Page 180]{Anderson}), to prove (\ref{June-27-1}), we only need to show
\be\label{main-eq-710}\|\mathcal{H}[g_d]\|_2^2=\|g_d\|_2^2\ \ \ \mbox{for}\ \ \ d=0, \pm1, \pm2, \cdots .\ee
In fact, we can choose the function $\varphi(w)=g_d(w)\in L^2(\ID, \mathrm{d}A)$, for fixed $d\in \mathbb{Z}$.

We now prove (\ref{main-eq-710}) as follows:
Elementary calculations show that
$$\|g_d\|_{2}^2=\sum\limits_{n,\, l\geq\max\{0, -d\} }\frac{b_n\bar{b}_l}{n+l+d+1}.$$

Let $z=re^{it}\in\ID$. We now calculate $\|\mathcal{H}[g_d]\|_2^2$, which is given by  (\ref{hgd}), as follows:
If $d>1$, then long but straightforward calculations show that
\begin{align*}
  &\|\mathcal{H}[g_d]\|_2^2  =\int_{\ID}\left|\sum\limits_{n=0}^{\infty}b_n\left(\frac{n+d}{n+1}r^{2n+d}e^{i(d-2)t}-\frac{d-1}{n+1}r^{d-2}e^{i(d-2)t}\right)\right|^2\mathrm{d}A(z) \\
   &=2\sum\limits_{n,\,  l\geq0}\frac{b_n\bar{b}_l}{(n+1)(l+1)}\bigg\{\frac{(n+d)(l+d)}{2n+2d+2l+2}-\frac{d-1}{2}\bigg\}\\
   & =\sum\limits_{n,\,  l\geq0}\frac{b_n\bar{b}_l}{n+l+d+1}.
\end{align*}
This shows that in this case, we have $\|\mathcal{H}[g_d]\|_2^2=\|[g_d]\|_2^2$.

If $d=1$, then
$$ \|\mathcal{H}[g_1]\|_2^2=\sum\limits_{n,\,  l\geq0}b_n\overline{b_l}\frac{1}{n+l+2}=\|g_1\|_{2}^2.$$

If $d<1$, then elementary calculations show that
\begin{align*}
  &\|\mathcal{H}[g_d]\|_2^2  =\int_{\ID}\left|\sum\limits_{n=-d}^{\infty}\left(\frac{n+d}{n+1}b_nr^{2n+d}e^{i(d-2)t}-\frac{1-d}{n+1}\bar{b}_nr^{-d}e^{-idt}\right)\right|^2\mathrm{d}A(z) \\
  &=\sum\limits_{n,\,l  \geq-d}\int_{\ID}\left\{\frac{(n+d)(l+d)}{(n+1)(l+1)}b_n\bar{b}_lr^{2n+2l+2d}-\frac{(n+d)(1-d)}{(n+1)(l+1)}b_nb_lr^{2n}e^{i(2d-2)t}\right.\\
  &\ \ \ \left.-\frac{(1-d)(l+d)}{(n+1)(l+1)}\bar{b}_n\bar{b}_lr^{2l}e^{i(2-2d)t}+\frac{(1-d)^2}{(n+1)(l+1)}\bar{b}_nb_lr^{-2d}\right\}\mathrm{d}A(z).
\end{align*}
Thus
$$ \|\mathcal{H}[g_d]\|_2^2=2\sum\limits_{n,\, l\geq-d}\left(\frac{b_n\bar{b}_l(n+d)(l+d)}{(n+1)(l+1)(2n+2l+2d+2)}+\frac{\bar{b}_nb_l(1-d)}{2(n+1)(l+1)}\right).$$
Note that
$$\sum\limits_{n,\, l\geq-d}\frac{\bar{b}_nb_l(1-d)}{2(n+1)(l+1)}=\sum\limits_{n,\, l\geq-d}\frac{\bar{b}_lb_n(1-d)}{2(l+1)(n+1)}.$$
We then have
\begin{align*}
   \|\mathcal{H}[g_d]\|_2^2 & =2\sum\limits_{n,\, l\geq-d}\frac{b_n\bar{b}_l}{(n+1)(l+1)}\left(\frac{(n+d)(l+d)}{2n+2l+2d+2}+\frac{1-d}{2}\right) \\
   & =\sum\limits_{n,\,  l\geq0}\frac{b_n\bar{b}_l}{n+l+d+1}.
\end{align*}
Again, in this case, we have $\|\mathcal{H}[g_d]\|_2^2 =\|g_d\|_2^2$.

Based on the above discussions, we see that
$$\|\mathcal{H}[\varphi]\|_{2}^2\equiv\|\varphi\|_2^2,$$
for any $\varphi\in L^2(\ID, \mathrm{d}A)$.
This shows that $\mathcal{H}$ acts as an isometry in $L^2(\ID, \mathrm{d}A)$, and thus, the proof is complete.
\qed

\vspace*{5mm}
\noindent {\bf Acknowledgments}.
The research of the authors were supported by NSFs of China (No. 11501220, 11971182), NSFs of Fujian Province (No. 2016J01020, 2019J0101)
and the Promotion Program for Young and Middle-aged Teachers in Science and Technology Research of Huaqiao University (ZQN-PY402).

\end{document}